\let\uml\"
\newtheorem{theorem}{Theorem}[section]
\newtheorem{prop}[theorem]{Proposition}
\newtheorem{corollary}[theorem]{Corollary}
\newtheorem{lemma}[theorem]{Lemma}
\newtheorem{mainthm}{Theorem}
\theoremstyle{definition}
\newtheorem{definition}[theorem]{Definition}
\newtheorem{rem}[theorem]{Remark}
\newtheorem{example}[theorem]{Example}
\numberwithin{equation}{section}
\newcommand\N{{\mathbb{N}}}
\newcommand\C{{\mathbb{C}}}
\newcommand\Q{{\mathbb{Q}}}
\newcommand\Z{{\mathbb{Z}}}
\newcommand{\U}{\widehat{\mathcal{U}}}
\newcommand{\G}{\widehat{\mathcal{GL}}}
\begin{document}
\title[AF-algebras and Rational Homotopy Theory]{AF-algebras and Rational Homotopy Theory}
\author[Apurva Seth, Prahlad Vaidyanathan]{Apurva Seth, Prahlad Vaidyanathan}
\address{Department of Mathematics\\ Indian Institute of Science Education and Research Bhopal\\ Bhopal ByPass Road, Bhauri, Bhopal 462066\\ Madhya Pradesh. India.}
\email{apurva17@iiserb.ac.in, prahlad@iiserb.ac.in}
\date{}
\begin{abstract}
We give a procedure to compute the rational homotopy groups of the group of quasi-unitaries of an AF-algebra. As an application, we show that an AF-algebra is K-stable if and only if it is rationally K-stable.
\end{abstract}
\subjclass[2010]{Primary 46L85; Secondary 46L80}
\keywords{Nonstable K-theory, C*-algebras}
\maketitle
\parindent 0pt

Given a unital C*-algebra $A$, the topological group $\mathcal{U}_n(A)$ of unitary $n\times n$ matrices over $A$ carries a great deal of information about $A$. In particular, the homotopy groups of $\mathcal{U}_n(A)$ converge (in a suitable sense) to the $K$-theory groups of $A$. Therefore, the study of these homotopy groups is important to understand finer $K$-theoretic information about $A$, and is termed nonstable $K$-theory. \\

Rieffel introduced the study of these groups in \cite{rieffel2}, and Thomsen \cite{thomsen} extended this to non-unital C*-algebras by developing the concept of quasi-unitaries. This allowed him to prove some functorial properties of these groups and also compute them in certain cases. Most of his calculations (and those of Rieffel) relied on an interesting property that certain infinite dimensional C*-algebras possess, namely that these homotopy groups are naturally isomorphic to the corresponding $K$-theory groups of the C*-algebra - a property called \emph{$K$-stability} (See \cref{defn:k_stable}). \\

If a C*-algebra $A$ is $K$-stable, then calculating the homotopy groups $\pi_j(\mathcal{U}_n(A))$ amounts to calculating the $K$-theory groups $K_{j+1}(A)$. However, in the absence of $K$-stability, we do not, as yet, have any good tools to calculate these homotopy groups. Even in the simplest of cases, when $A$ is $\C$, the algebra of complex numbers, the groups $\pi_j(\mathcal{U}_n(\C))$ are naturally related to those of spheres, and are not known for many values of $j$ and $n$. It is primarily to remedy this difficulty, that topologists introduced rational homotopy theory, which is where we turn to in this paper. \\

Upto rationalization, the homotopy groups of $\mathcal{U}_n(\C)$ are well-known (\cref{examplefinite}). Furthermore, maps between finite dimensional C*-algebras are, upto unitary equivalence, well understood. Our first main result shows that one can leverage these facts to compute the rational homotopy groups of the group of quasi-unitaries of an AF-algebra.

\begin{mainthm}\label{mainthm:one}
Let $A$ be an AF-algebra, and $\U(A)$ denote the group of quasi-unitaries of $A$. Given a Bratteli diagram describing $A$ and a positive integer $m$, there is a diagram $\mathcal{D}_m(A)$ of $\Q$-vector spaces whose inductive limit is the group $\pi_m(\U(A))\otimes \Q$.
\end{mainthm}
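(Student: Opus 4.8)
The plan is to reduce the computation to the finite-dimensional building blocks of $A$ via the continuity of the quasi-unitary functor, and then to read off the connecting maps from the Bratteli diagram. Write $A=\varinjlim_n A_n$ where each $A_n=\bigoplus_i M_{k_i^{(n)}}(\C)$ is finite dimensional and the connecting $*$-homomorphisms are those recorded by the Bratteli diagram. Since $\U(\cdot)$ commutes with inductive limits, since $\pi_m$ commutes with inductive limits of such topological groups (the image of any sphere $S^m$, and of any homotopy, is compact and therefore factors through a finite stage), and since $-\otimes\Q$ is exact and commutes with filtered colimits, I would first establish the chain of natural isomorphisms
\[
\pi_m(\U(A))\otimes\Q\;\cong\;\varinjlim_n\bigl(\pi_m(\U(A_n))\otimes\Q\bigr).
\]
This reduces the theorem to computing each term on the right and identifying the maps between them.

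For the terms, each matrix summand is unital, so $\U(M_k(\C))\cong\mathcal{U}_k(\C)$, and $\U$ sends finite direct sums to direct products; hence $\U(A_n)\cong\prod_i\mathcal{U}_{k_i^{(n)}}(\C)$. Applying \cref{examplefinite}, which gives $\pi_m(\mathcal{U}_k(\C))\otimes\Q=\Q$ when $m$ is odd with $m\le 2k-1$ and $0$ otherwise, each term becomes a finite-dimensional $\Q$-vector space $V_n^{(m)}$ whose dimension counts the summands $M_{k_i^{(n)}}$ with $2k_i^{(n)}-1\ge m$ (and which vanishes for even $m$). These spaces are the vertices of $\mathcal{D}_m(A)$.

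The crux is identifying the induced map $V_n^{(m)}\to V_{n+1}^{(m)}$. Here I would use that any $*$-homomorphism between finite-dimensional C*-algebras is, up to unitary equivalence, in standard (block-diagonal) form determined by a multiplicity matrix $(s_{ji})$, where $s_{ji}$ is the number of copies of the $i$-th summand of $A_n$ landing in the $j$-th summand of $A_{n+1}$. Because inner automorphisms of a path-connected topological group are homotopic to the identity, unitarily equivalent homomorphisms induce the same map on $\pi_m$, so one may compute with the standard form. On each block the induced map $\mathcal{U}_k(\C)\to\mathcal{U}_l(\C)$ is, up to a constant summand, a direct sum of $s_{ji}$ copies of the standard inclusion; since the Chern character is additive over direct sums and the rational homotopy of $\mathcal{U}_k(\C)$ is detected by its components, the induced map on $\pi_{2r-1}(\cdot)\otimes\Q$ is multiplication by $s_{ji}$ wherever both groups are nonzero. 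Assembling the blocks, the connecting map of $\mathcal{D}_m(A)$ is the linear map whose matrix, on the relevant index sets, is $(s_{ji})$.

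I expect this last step to be the main obstacle, for two reasons. First, one must check carefully that the additivity of the rational homotopy generators under block sums persists in the \emph{unstable} range $m\le 2k-1$, and not merely stably; I would handle this by exhibiting the relevant classes as restrictions of the stable Chern-character classes. Second, one must verify that the entire assignment is functorial and independent of the unitary-equivalence choices made at each level, so that $\mathcal{D}_m(A)$ is genuinely well defined. Granting this, I would define $\mathcal{D}_m(A)$ to be the directed system $\{V_n^{(m)},(s_{ji})\}$ and conclude by the displayed isomorphism that its inductive limit is $\pi_m(\U(A))\otimes\Q$.
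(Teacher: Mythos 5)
Your proposal is correct and follows essentially the same skeleton as the paper: reduce to the finite-dimensional stages via continuity of $F_m=\pi_m(\U(\cdot))\otimes\Q$, read off the vertex spaces from \cref{examplefinite}, and identify the connecting maps with the multiplicity matrices (\cref{finitedimthm}). The one genuine difference is how you establish that a block embedding of multiplicity $s$ induces multiplication by $s$ on $\pi_m(\cdot)\otimes\Q$. The paper handles the single inclusion $M_n(\C)\hookrightarrow M_N(\C)$ by a Leray--Serre spectral sequence computation for $\mathcal{U}_n\to\mathcal{U}_{n+1}\to S^{2n+1}$ combined with Sullivan's theorem (\cref{inclusion}), and then obtains the factor $r$ from Whitehead's lemma, $\mathrm{diag}(f,\dots,f,1)\sim\mathrm{diag}(f^r,1)$ (\cref{leq}); you propose instead to stabilize and invoke additivity of the stable Chern-character classes. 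Your route does close the obstacle you flag: for odd $m\le 2k-1$ the stabilization $\pi_m(\mathcal{U}_k)\to\pi_m(\mathcal{U}_\infty)$ is an isomorphism (essentially \cref{lem:inclusion_isomorphism}), it intertwines the unstable block sum with the stable $H$-space addition, and the stable statement is standard; this is arguably more elementary than the spectral-sequence argument. Two minor points to tighten: the commutation of $\pi_m\circ\U$ with C*-inductive limits does not follow from compactness of images alone, since $A$ is only the \emph{completion} of $\bigcup_n A_n$ --- one needs the approximation argument underlying the continuity of $G_m$ due to Thomsen and Handelman, which is what the paper cites; and when assembling the blocks you should also record the maps induced by the coordinate projections of $A_n$ (the paper does this via the K\"unneth isomorphism), though for a group-valued homotopy functor this follows at once from additivity.
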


Our next result is a characterization of K-stability for an AF-algebra. We say that a C*-algebra $A$ is \emph{rationally} $K$-stable if the rational homotopy groups of the group of quasi-unitaries of $A$ are naturally isomorphic to those of $M_n(A)$ for all $n\geq 1$. As a consequence of \cref{mainthm:one}, we show that

\begin{mainthm}\label{mainthm:two}
For an AF-algebra $A$, the following are equivalent:
\begin{enumerate}
\item $A$ is $K$-stable.
\item $A$ is rationally $K$-stable.
\item $A$ has no non-zero finite dimensional representations.
\end{enumerate}
\end{mainthm}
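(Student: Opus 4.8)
The plan is to prove Theorem B by establishing the cycle of implications (1) $\Rightarrow$ (2) $\Rightarrow$ (3) $\Rightarrow$ (1). The implication (1) $\Rightarrow$ (2) should be essentially formal: if $A$ is $K$-stable, then the homotopy groups $\pi_m(\U(A))$ are naturally isomorphic to $K_{m+1}(A)$, and the corresponding statement holds for $M_n(A)$; since $K$-theory is matrix-stable, $K_{m+1}(A) \cong K_{m+1}(M_n(A))$, and tensoring with $\Q$ preserves this natural isomorphism. Thus rational $K$-stability follows immediately from $K$-stability.

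The heart of the argument lies in (2) $\Rightarrow$ (3), and I expect this to be the main obstacle. I would argue by contraposition: suppose $A$ has a non-zero finite dimensional representation, and produce a failure of rational $K$-stability. A finite dimensional representation gives a surjection (or at least a non-trivial homomorphism) from $A$ onto some matrix algebra $M_k(\C)$, and I would use \cref{mainthm:one} to compute both $\pi_m(\U(A))\otimes\Q$ and $\pi_m(\U(M_n(A)))\otimes\Q$ via the diagrams $\mathcal{D}_m$. The key point is that $M_k(\C)$ is \emph{not} $K$-stable --- its rational homotopy groups are those of the unitary group $\mathcal{U}_k(\C)$, which by \cref{examplefinite} differ from the $K$-theory groups in the unstable range. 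The presence of a finite dimensional quotient should inject this unstable behaviour into the inductive limit computing $\pi_m(\U(A))\otimes\Q$, causing the natural map $\pi_m(\U(A))\otimes\Q \to \pi_m(\U(M_n(A)))\otimes\Q$ to fail to be an isomorphism for suitable $m$ and $n$. Making this precise requires tracking how the finite dimensional summands contribute to the diagram $\mathcal{D}_m(A)$ and checking that their contribution is not washed out in the limit --- this is the delicate step.

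For (3) $\Rightarrow$ (1), I would appeal to an existing characterization of $K$-stability for AF-algebras (or more generally for C*-algebras of this type) in terms of the absence of finite dimensional representations. This direction is likely already available in the literature on nonstable $K$-theory: the condition that $A$ has no non-zero finite dimensional representations is equivalent to saying that $A$ is, in the appropriate sense, ``properly infinite'' or stable enough along its Bratteli diagram that the canonical comparison maps between $\pi_m(\U(M_n(A)))$ and $K_{m+1}(A)$ become isomorphisms. Concretely, the multiplicities in the Bratteli diagram must grow without bound, and this growth forces the stabilization maps in nonstable $K$-theory to be isomorphisms.

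The anticipated main difficulty is the contrapositive step (2) $\Rightarrow$ (3): one must show that a single finite dimensional representation is enough to obstruct rational $K$-stability, which means understanding precisely how finite dimensional building blocks survive in the inductive limit of $\Q$-vector spaces furnished by \cref{mainthm:one}. I would isolate a specific degree $m$ (guided by \cref{examplefinite}, where the rational homotopy of $\mathcal{U}_k(\C)$ is concentrated in odd degrees up to $2k-1$) at which the comparison with the matrix-stabilized algebra visibly fails, and verify that the corresponding class is non-zero in the limit.
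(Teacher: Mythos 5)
Your skeleton (a cycle of implications, with (2)$\Rightarrow$(3) by contraposition) matches the paper's, and (1)$\Rightarrow$(2) is indeed immediate since $F_m = G_m\otimes\Q$. But both hard implications are left as gaps. For (2)$\Rightarrow$(3), you correctly guess that one should pick an odd degree $m$ where the finite-dimensional quotient $M_K(\C)$ behaves unstably, but the mechanism you propose --- tracking the contribution of finite-dimensional summands through the inductive limit of diagrams from \cref{mainthm:one} and ``checking that their contribution is not washed out'' --- is exactly the part you never supply, and it is not the route the paper takes. The paper instead uses that $F_m$ is \emph{exact} on AF-algebras for odd $m$ (\cref{lem:exact}, which in turn rests on the vanishing of $F_m$ in even degrees for AF-algebras, \cref{lem:even_zero}), applies it to $0\to I\to A\to M_K(\C)\to 0$ and its inflation by $M_n$, and chooses odd $m$ with $2K-1 < m \leq 2nK-1$: then $F_m(M_K(\C))=0$ while $F_m(M_{nK}(\C))=\Q$, and a two-line diagram chase shows $F_m(\iota_A)$ cannot be surjective. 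Without the exactness input (or some substitute), your plan does not close: surjectivity or injectivity of a map of inductive limits of $\Q$-vector spaces is not read off from the finite stages without controlling the connecting maps, which is precisely the ``delicate step'' you defer.

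For (3)$\Rightarrow$(1), the appeal to ``an existing characterization in the literature'' fails: no such characterization was available (that is the point of \cref{mainthm:two}), and the closest prior result, Thomsen's large-denominators criterion, is strictly weaker --- the paper exhibits a $K$-stable AF-algebra whose $K_0$ does not have large denominators. Also, ``properly infinite'' is the wrong notion here; AF-algebras are stably finite. The actual content of this implication is a combinatorial construction (\cref{lem: min_dim_large}): given that $A$ has no non-zero finite-dimensional representations, one shows via the $K$-chain analysis of \cref{lem: k_chain} that for every $m$ one can telescope and compress a Bratteli diagram to obtain a generating nest all of whose simple summands are $M_\ell(\C)$ with $\ell\geq m$; note it is the sizes of the simple summands, not the multiplicities, that must be made large, and this requires modifying the given diagram, not merely observing growth in it. Once that nest exists, the classical stability range for $\pi_k(\mathcal{U}_n(\C))$ (\cref{lem:inclusion_isomorphism}) gives that $G_k(\iota)$ is an isomorphism at each finite stage, and continuity of $G_k$ finishes. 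As written, your proposal identifies the right pressure points but proves neither of the two nontrivial implications.
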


Furthermore, we show that this property can also be read off from a Bratteli diagram describing the algebra.

\section{Preliminaries}
 
We begin by reviewing the work of Thomsen in constructing the nonstable K-groups associated to a C*-algebra. For the proofs of all the facts mentioned below, the reader may refer to \cite{thomsen}. \\

Let $A$ be a $C^*$-algebra (not necessarily unital). Define an associative composition $\cdot$ on $A$ by
\[
a\cdot b=a+b-ab
\]
An element $a\in A$ is said to be quasi-invertible if there exists $b\in A$ such that $a\cdot b = b\cdot a = 0$, and we write $\G(A)$ for the set of all quasi-invertible elements in $A$. An element $u\in A$ is said to be a quasi-unitary if $u\cdot u^{\ast} = u^{\ast}\cdot u = 0$, and we write $\U(A)$ for the set of all quasi-unitary elements in $A$. \\

If $B$ is a unital $C^*$-algebra, we write $\mathcal{GL}(B)$ for the group of invertibles in $B$ and $\mathcal{U}(B)$ for the group of unitaries in $B$. 

\begin{lemma}\label{lem:quasi_unitary_ideal}
Let $B$ be a unital C*-algebra and $A\subset B$ be a closed two-sided ideal of $B$. Then $a\in A$ is quasi-invertible if and only if $1-a \in \mathcal{GL}(B)$; and similarly, $u\in A$ is a quasi-unitary if and only if $(1-u) \in \mathcal{U}(B)$
\end{lemma}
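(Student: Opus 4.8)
The plan is to linearize the quasi-operation by means of the translation map $\phi\colon B\to B$, $\phi(x)=1-x$. The key computation I would carry out first is that for all $x,y\in B$,
\[
(1-x)(1-y)=1-(x+y-xy)=1-(x\cdot y),
\]
so that $\phi$ converts the composition $\cdot$ into ordinary multiplication in $B$, and sends the $\cdot$-identity $0$ to the multiplicative identity $1$. This single identity will drive both equivalences.

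For the quasi-invertibility statement, the forward direction is immediate: if $a\in A$ has a quasi-inverse $b$ (so $a\cdot b=b\cdot a=0$), then applying the identity gives $(1-a)(1-b)=(1-b)(1-a)=1$, whence $1-a\in\mathcal{GL}(B)$. The converse is where the ideal hypothesis enters, and I expect this to be the only real obstacle. Suppose $1-a$ is invertible in $B$ with inverse $c$. I would then observe that $(1-a)c=1$ forces $c=1+ac$, so that $1-c=-ac$. Since $a\in A$ and $A$ is a two-sided ideal, $ac\in A$, and therefore $b:=1-c=-ac$ lies in $A$. A final check using the displayed identity shows $a\cdot b=1-(1-a)(1-b)=1-(1-a)c=0$ and likewise $b\cdot a=0$, so $a$ is quasi-invertible with quasi-inverse in $A$.

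For the quasi-unitary statement, I would combine the same identity with the observation that $\phi$ is compatible with the involution, in the sense that $(1-u)^{\ast}=1-u^{\ast}$. If $u\in A$ is quasi-unitary then $(1-u)(1-u)^{\ast}=1-(u\cdot u^{\ast})=1$ and symmetrically $(1-u)^{\ast}(1-u)=1$, so $1-u\in\mathcal{U}(B)$; conversely, unitarity of $1-u$ gives $u\cdot u^{\ast}=1-(1-u)(1-u^{\ast})=0$ and $u^{\ast}\cdot u=0$ directly. Here no separate membership argument is needed, since $u$ (and hence $u^{\ast}$, as the closed ideal $A$ is automatically self-adjoint) already lies in $A$ on both sides of the equivalence.

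The main obstacle, as noted, is purely the converse half of the invertibility claim: one must produce the quasi-inverse inside the ideal $A$ rather than merely in $B$, and the computation $1-c=-ac\in A$ is exactly what the ideal structure supplies. Everything else reduces to the homomorphism property of $\phi$ together with the elementary bookkeeping of the involution.
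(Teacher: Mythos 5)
Your proof is correct: the translation identity $(1-x)(1-y)=1-(x\cdot y)$ and the observation that $1-c=-ac\in A$ (so the quasi-inverse lands in the ideal) are exactly the points that need to be made. The paper itself gives no proof of this lemma, deferring to Thomsen, and your argument is the standard one found there; nothing further is needed.
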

Since $A$ can be thought of as a closed ideal in its unitization $A^+$, it follows that $\G(A)$ is open in $A$, $\U(A)$ is closed in $A$, and they both form topological groups. Furthermore, the map $r: \G(A)\to \U(A)$ given by 
\[
r(a) := 1-(1-a)((1-a^*)(1-a))^{-1/2}
\]
is a strong deformation retract, and hence a homotopy equivalence. \\

For elements $u,v\in \U(A)$, we write $u\sim v$ if there is a continuous function $f:[0,1]\to \U(A)$ such that $f(0) = u$ and $f(1) = v$. We write $\U_0(A)$ for the set of $u\in \U(A)$ such that $u\sim 0$. Note that $\U(A)$ and $\U_0(A)$ are both topological groups with a common base point $0$, and that $\U_0(A)$ is connected. We now define the two functors we are interested in.
  
\begin{definition}
Let $A$ be a $C^*$-algebra (not necessarily unital) and $k\geq 0$ and $m\geq 1$ be integers. Define
\begin{equation*}
\begin{split}
G_k(A) &:=\pi_k(\U(A)), \text{ and } \\
F_m(A) &:= \pi_m(\U_0(A))\otimes \Q \cong G_m(A)\otimes \Q
\end{split}
\end{equation*}
\end{definition}

\begin{rem}\label{rem:quasi_unitary_unital}
Note that if $A$ is unital, the map $\U(A) \to \mathcal{U}(A)$ given by $u \mapsto (1-u)$ induces natural isomorphisms
\[
G_k(A) \cong \pi_k(\mathcal{U}(A)) \text{ and } F_m(A) \cong \pi_m(\mathcal{U}_0(A))\otimes \Q
\]
\end{rem}

Recall \cite[Definition 21.1.1]{blackadar} that a homology theory on the category of $C^{\ast}$-algebras is a sequence $\{h_n\}$ of covariant, homotopy invariant functors from the category of $C^*$-algebras to the category of abelian groups such that, if $0 \to J\xrightarrow{\iota} B\xrightarrow{p} A\to 0$ is a short exact sequence of $C^*$-algebras, then for each $n \in \N$, there exists a connecting map $\partial : h_n(A)\to h_{n-1}(J)$, making the following sequence exact
\[
\ldots \xrightarrow{\partial} h_n(J)\xrightarrow{h_n(\iota)} h_n(B) \xrightarrow{h_n(p)} h_n(A)\xrightarrow{\partial} h_{n-1}(J)\to \ldots
\]
and furthermore, $\partial$ is natural with respect to morphisms of short exact sequences. Furthermore, we say that a homology theory $\{h_n\}$ is continuous if, whenever $A = \lim A_i$ is an inductive limit in the category of C*-algebras, then $h_n(A) = \lim h_n(A_i)$ in the category of abelian groups. The proof of the next theorem is contained in \cite[Proposition 2.1]{thomsen} and \cite[Theorem 4.4]{handelman}.

\begin{prop}
For integers $k\geq 0$ and $m\geq 1$, $G_k$ and $F_m$ are continuous homology theories.
\end{prop}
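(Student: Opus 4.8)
The plan is to check the axioms of a continuous homology theory separately for the two families, establishing them for $G_k$ first and then transporting them to $F_m$ by a flatness argument. The functorial framework is immediate: $A\mapsto\U(A)$ is a functor from C*-algebras to topological groups, since every *-homomorphism $\varphi$ carries quasi-unitaries to quasi-unitaries (if $u\cdot u^*=u^*\cdot u=0$ then $\varphi(u)\cdot\varphi(u)^*=\varphi(u\cdot u^*)=0$), and $\pi_k$ is a homotopy-invariant functor valued in groups (abelian in the present setting, with the multiplication on $\U(A)$ inducing the group structure on $\pi_0$ as well). Hence the composite $G_k=\pi_k\circ\U$ is a covariant, homotopy-invariant functor into abelian groups, homotopy invariance passing through because a homotopy of *-homomorphisms induces a homotopy of the associated maps of quasi-unitary groups.

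The substantive point is the existence and naturality of the connecting maps and the long exact sequence, for which I would appeal to Thomsen's analysis. Given a short exact sequence $0\to J\xrightarrow{\iota}B\xrightarrow{p}A\to0$, the key is that the induced sequence $\U(J)\to\U(B)\to\U(A)$ behaves as a fibration: one needs a relative lifting property for quasi-unitaries along the surjection $p$, so that $\U(J)$ is, up to homotopy, the fiber of $\U(B)\to\U(A)$ over the base point $0$. Once this is in place, the long exact sequence of homotopy groups of a fibration supplies the connecting maps $\partial:G_k(A)\to G_{k-1}(J)$ together with exactness, and naturality of $\partial$ follows from the functoriality of the fiber-sequence construction with respect to morphisms of short exact sequences. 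I expect this lifting/fibration step to be the main obstacle, since it is where the C*-algebraic structure (rather than formal homotopy theory) genuinely enters; everything downstream is a formal consequence.

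For continuity of $G_k$, suppose $A=\varinjlim A_i$. The groups $\U(A_i)$ form an inductive system whose limit maps to $\U(A)$ with dense image, and since the spheres $S^k$ are compact, one can approximate and then homotope any based map $S^k\to\U(A)$, as well as any homotopy between such maps, into a finite stage $\U(A_i)$; this yields $\pi_k(\U(A))=\varinjlim\pi_k(\U(A_i))$, which is exactly continuity. This is the content I would extract from Handelman's Theorem~4.4.

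Finally, I would transfer all four properties to $F_m=G_m\otimes\Q$, using that for $m\geq1$ one has $\pi_m(\U_0(A))=\pi_m(\U(A))$ since $\pi_m$ depends only on the base-point component. Functoriality and homotopy invariance are immediate because $-\otimes\Q$ is a functor. For the long exact sequence, $\Q$ is flat over $\Z$, so applying $-\otimes\Q$ to the exact sequence for $G$ preserves exactness and yields the connecting maps for $F$, with naturality inherited. For continuity, $-\otimes\Q$ commutes with inductive limits, giving $F_m(\varinjlim A_i)=(\varinjlim G_m(A_i))\otimes\Q=\varinjlim(G_m(A_i)\otimes\Q)=\varinjlim F_m(A_i)$. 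Thus the only genuinely analytic input is the fibration property underlying the long exact sequence for $G_k$, and the remainder of the argument is formal.
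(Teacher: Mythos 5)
Your proposal is correct and follows essentially the same route as the paper, which simply cites Thomsen's Proposition~2.1 for the homology-theory structure of $G_k$ (where the quasi-unitary lifting/fibration property you flag as the substantive step indeed resides) and Handelman's Theorem~4.4 for continuity. The transfer to $F_m$ via flatness of $\Q$ and commutation of $-\otimes\Q$ with inductive limits is the same formal step the paper leaves implicit in the identification $F_m \cong G_m \otimes \Q$.
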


Note that every functor in a homology theory is additive. We conclude this discussion with a calculation of $F_m(A)$ when $A$ is a finite dimensional C*-algebra. \\

Recall that a space $X$ is said to be of finite rational type if $H^i(X;\Q)$ is finite dimensional for each $i$. Sullivan \cite{sullivan} and Bousfield-Gugenheim \cite{bousfield} showed that the rational homotopy category of nilpotent spaces of finite rational type is equivalent to the homotopy category of commutative, associative, differential graded rational algebras with minimal models of finite type. Furthermore, the rational homotopy groups of the space may be computed from the corresponding algebra. For $H$-spaces, this theorem simplifies to the following theorem.

\begin{theorem}[Sullivan]\label{sullivan}
Let $X$ be connected $H$-space of finite type. Then its minimal model is of the form $(\wedge V,0)$. In particular,
\[
H^{\ast}(X;\mathbb{Q})=\wedge V\text{ and } \pi_{\ast}(X)\otimes\mathbb{Q}\cong V^{\ast}
\]
Furthermore, the construction of $V$ is functorial and the above isomorphism is natural. 
\end{theorem}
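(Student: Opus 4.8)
The plan is to reduce the statement to two classical inputs: the structure theory of graded Hopf algebras over a field of characteristic zero, and the uniqueness of the minimal Sullivan model. Since $X$ is a connected $H$-space it is in particular a simple, hence nilpotent, space, so the Sullivan--Bousfield--Gugenheim machinery quoted above applies: $X$ admits a minimal model $(\wedge V,d)$ of finite type with $d$ decomposable, and the general theory already supplies the natural isomorphism $V^{\ast}\cong\pi_{\ast}(X)\otimes\Q$. The whole content of the theorem is therefore the single assertion that $d=0$, from which both displayed formulas follow at once.

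First I would exploit the $H$-structure at the level of cohomology. The multiplication $\mu\colon X\times X\to X$, composed with the K\"unneth isomorphism over the field $\Q$, turns $\mu^{\ast}$ into a comultiplication $H^{\ast}(X;\Q)\to H^{\ast}(X;\Q)\otimes H^{\ast}(X;\Q)$, while the unit and the connectivity of $X$ provide a counit and an augmentation. Thus $H^{\ast}(X;\Q)$ is a connected, graded-commutative Hopf algebra of finite type. The decisive step is to invoke the Hopf--Leray structure theorem, valid precisely because $\Q$ has characteristic zero: such a Hopf algebra is free graded-commutative, so $H^{\ast}(X;\Q)\cong\wedge W$ for a positively graded $W$ (polynomial on the even generators, exterior on the odd ones). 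I expect this to be the main obstacle, in that one must genuinely use the finite-type hypothesis and be careful that the characteristic-zero conclusion is freeness, not merely a tensor product of truncated monogenic pieces.

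Next I would promote this algebra isomorphism to a statement about the minimal model by producing an explicit formality quasi-isomorphism. Choosing, for a homogeneous basis $\{w_i\}$ of $W$, cocycle representatives $z_i$ in the Sullivan--de Rham algebra $A_{\mathrm{PL}}(X)$, the freeness of $\wedge W$ lets me extend $w_i\mapsto z_i$ to a CDGA morphism $\varphi\colon(\wedge W,0)\to A_{\mathrm{PL}}(X)$; it is a chain map exactly because each $z_i$ is a cocycle and the source carries the zero differential. By construction $\varphi$ induces the identity of $\wedge W$ on cohomology, hence is a quasi-isomorphism. Since $(\wedge W,0)$ is visibly a minimal Sullivan algebra (the zero differential is trivially decomposable, and $W$ is concentrated in positive degrees because $X$ is connected), the uniqueness of minimal models identifies it with the minimal model of $X$; that is, $V=W$ and $d=0$. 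The equalities $H^{\ast}(X;\Q)=\wedge V$ and $\pi_{\ast}(X)\otimes\Q\cong V^{\ast}$ are then immediate.

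Finally, for functoriality and naturality I would observe that a map of $H$-spaces induces a morphism of Hopf algebras on cohomology, hence a morphism of the associated free algebras, and that the minimal-model functor together with the natural duality $V^{\ast}\cong\pi_{\ast}(X)\otimes\Q$ transports this compatibly. These assertions are part of the standard Sullivan package rather than new work, so once freeness of the cohomology is in hand everything downstream is formal.
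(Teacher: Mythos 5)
Your argument is correct, but note that the paper does not actually prove this statement: it is quoted as a classical theorem of Sullivan and Bousfield--Gugenheim, with the preceding paragraph serving only to place it inside the general minimal-model equivalence. Your proposal is the standard textbook derivation (formality of $H$-spaces): connected $H$-spaces are simple, hence nilpotent, so the minimal model $(\wedge V,d)$ and the natural identification $V^{\ast}\cong\pi_{\ast}(X)\otimes\Q$ come from the general theory; the Hopf--Leray structure theorem in characteristic zero (which, as you rightly emphasize, needs only the counit condition on $\mu^{\ast}$, not coassociativity, and does need the finite-type hypothesis) gives $H^{\ast}(X;\Q)\cong\wedge W$ free; and the explicit quasi-isomorphism $(\wedge W,0)\to A_{\mathrm{PL}}(X)$ built on cocycle representatives, together with uniqueness of minimal models, forces $d=0$. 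One small refinement: the functoriality the paper actually uses is for arbitrary continuous maps (which happen to be $H$-maps in its applications), and this follows without any Hopf-algebra compatibility, since once $d=0$ one has $V\cong Q\bigl(H^{\ast}(X;\Q)\bigr)$, the module of indecomposables of the cohomology ring, which is functorial for all ring maps; your restriction to maps of $H$-spaces is therefore unnecessary, though harmless here.
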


Note that, if $V$ is a non-negatively graded vector space, then $\wedge V$ denotes the free, commutative, graded algebra generated by $V$. Furthermore, if $\{v_j,j\in J\}$ is a basis of $V$, we shall write $\wedge (v_j)$ for $\wedge V$. 

\begin{example}\label{examplefinite}
For $n \in \N$,
\[
H^{\ast}(\mathcal{U}_n(\C);\Q) \cong \wedge(x_1,x_3,\hdots x_{2n-1})
\]
where $x_i$ has degree $i$, by \cite[III, Corollary 3.11]{mimura}. It follows by \cref{sullivan} that
\[
\pi_m(\mathcal{U}_n(\C))\otimes \Q = \begin{cases}
\Q &: 1\leq m\leq 2n-1, m \text{ odd} \\
0 &: \text{ otherwise}
\end{cases}
\]
Let $A=M_n(\C)$, then by \cref{rem:quasi_unitary_unital},
\[
F_m(A) \cong \pi_m(\mathcal{U}_n(\C))\otimes \Q
\]
Given a tuple $\overline{p} = (p_1,p_2,\ldots, p_n)$ of positive integers, we consider the finite dimensional C*-algebra associated to it by
\[
M(\overline{p}) := M_{p_1}(\C) \oplus M_{p_2}(\C) \oplus \ldots\oplus M_{p_n}(\C)
\]
By additivity of the functor $F_m$, we have
\[
F_m(M(\overline{p})) = \bigoplus_{j=1}^{n} \Q^{d(m,j)} \text{ where } d(m,j) = \begin{cases}
1 &: 0\leq m\leq 2p_j-1, m \text{ odd} \\
0 &: \text{ otherwise}
\end{cases}
\]
\end{example}

\section{AF-algebras}

An AF-algebra is a $C^{\ast}$-algebra which is an inductive limit of finite dimensional $C^{\ast}$-algebras. Given an AF-algebra $A$ and a specific inductive limit decomposition
\[
A_1\xrightarrow{\varphi_1} A_2 \xrightarrow{\varphi_2} A_3 \ldots \rightarrow A
\]
one can associate a diagram, called a Bratteli diagram, which encodes the algebras and maps that occur in this limit. We briefly review these ideas, and refer the reader to \cite[Section III.2]{davidson} for further details. \\

Let $\varphi : M(\overline{p^1})\to M(\overline{p^2})$ be a $\ast$-homomorphism between two finite dimensional C*-algebras (Using notation from \cref{examplefinite}). For $1\leq j\leq n_1$ and $1\leq i\leq n_2$, define $\varphi_{i,j} : M(p^1_j) \to M(p^2_i)$ to be the map given by
\[
M_{p^1_j}(\C) \hookrightarrow M(\overline{p^1}) \xrightarrow{\varphi} M(\overline{p^2}) \twoheadrightarrow M_{p^2_i}(\C)
\]
Then the multiplicity of $\varphi_{i,j}$ is
\begin{equation}\label{eqn:multiplicity}
\ell_{i,j} := \frac{Tr(\varphi_{i,j}(e))}{Tr(e)}
\end{equation}
where $e$ is any non-zero projection in $M(p^1_j)$. Note that this formula is independent of the choice of $e$. We write
\[
\Phi := [\ell_{i,j}] \in M_{n_2,n_1}(\Z^+)
\]
for the multiplicity matrix associated to $\varphi$. As is well known, the map $\varphi$ is unitarily equivalent to a map $\psi : A_1\to A_2$ which may be represented by a diagram $\mathcal{D}(A_1, A_2, \varphi)$ as
\begin{equation}\label{eqn:diagram}
\xymatrix{
p^1_1 \ar@2{-}[d]\ar@{-}[rd]\ar@3{-}[rd] & p^1_2\ar@{-}[d]\ar@{-}[rd] & \ldots & p^1_{n_1}\ar@{-}[d]\ar@2{-}[ld] \\
p^2_1 & p^2_2 & \ldots & p^2_{n_2}
}
\end{equation}
where the number of lines connecting $p^1_j$ to $p^2_i$ is $\ell_{i,j}$. Since the unitary group of a finite dimensional C*-algebra is connected, two unitarily equivalent $\ast$-homomorphisms are homotopic, and hence induce same maps at the level of $G_k(\cdot)$ and $F_m(\cdot)$. Thus, we shall henceforth assume that any $\ast$-homomorphism between finite dimensional $C^*$-algebras is given by a diagram as above. \\

Given an AF-algebra $A$ and a specific inductive limit decomposition
\[
A_1\xrightarrow{\varphi_1} A_2 \xrightarrow{\varphi_2} A_3 \ldots \rightarrow A
\]
one associates the diagram $\mathcal{D}(A)=\{\mathcal{D}(A_p,A_{p+1},\varphi_p)\}$, whose $p^{th}$ row is represented by tuples $\{(p,1) = p_1, (p,2) = p_2, \ldots, (p,n) = p_n\}$ if $A_p \cong M_{p_1}(\C)\oplus M_{p_2}(\C)\oplus \ldots \oplus M_{p_n}(\C)$, and the arrows from the $p^{th}$ row to the $(p+1)^{th}$ row encode the multiplicity matrix of $\varphi_p$. This is called a Bratteli diagram of $A$.\\

Our goal is to compute $F_m(A)$ from such a Bratteli diagram of $A$.  To this end, we need a few lemmas.

\begin{lemma}(\cite[II, Corollary 3.17]{mimura})\label{lem:inclusion_isomorphism}
For natural numbers $n<N$, let $\iota :M_n(\C)\hookrightarrow M_N(\C)$ denote the inclusion map $x\mapsto \text{diag}(x,0)$. Then, the induced map $G_k(\iota) : G_k(M_n(\C)) \to G_k(M_N(\C))$ is an isomorphism for $k<2n$ and a surjection for $k=2n$.
\end{lemma}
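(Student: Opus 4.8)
The plan is to transport the statement to the classical unitary groups and then run the standard fibration argument. First, since $M_n(\C)$ is unital, \cref{rem:quasi_unitary_unital} gives a natural isomorphism $G_k(M_n(\C)) \cong \pi_k(\mathcal{U}_n(\C))$ via $u \mapsto 1-u$. Under this identification the inclusion $\iota(x) = \mathrm{diag}(x,0)$ carries a quasi-unitary $u$ to $\mathrm{diag}(u,0)$, so that $1 - \iota(u) = \mathrm{diag}(1-u,\, I_{N-n})$. Thus $G_k(\iota)$ is precisely the map induced on $\pi_k$ by the standard block inclusion $j_{n,N} : \mathcal{U}_n(\C) \hookrightarrow \mathcal{U}_N(\C)$, $V \mapsto \mathrm{diag}(V, I_{N-n})$. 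Since every $\mathcal{U}_m(\C)$ is connected there are no basepoint subtleties, and it suffices to analyze $(j_{n,N})_*$.

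Second, I would factor $j_{n,N}$ as the telescope of one-step inclusions $\mathcal{U}_n(\C) \hookrightarrow \mathcal{U}_{n+1}(\C) \hookrightarrow \cdots \hookrightarrow \mathcal{U}_N(\C)$ and study a single step $j_m : \mathcal{U}_m(\C) \hookrightarrow \mathcal{U}_{m+1}(\C)$ by means of the fiber bundle
\begin{equation*}
\mathcal{U}_m(\C) \hookrightarrow \mathcal{U}_{m+1}(\C) \xrightarrow{\ q\ } S^{2m+1},
\end{equation*}
where $q$ sends a unitary matrix to its last column. Its long exact sequence of homotopy groups reads
\begin{equation*}
\cdots \to \pi_{k+1}(S^{2m+1}) \to \pi_k(\mathcal{U}_m(\C)) \xrightarrow{(j_m)_*} \pi_k(\mathcal{U}_{m+1}(\C)) \to \pi_k(S^{2m+1}) \to \cdots
\end{equation*}
Because $S^{2m+1}$ is $2m$-connected, $\pi_j(S^{2m+1}) = 0$ for all $j \le 2m$. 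Hence $(j_m)_*$ is an isomorphism whenever $\pi_k(S^{2m+1}) = \pi_{k+1}(S^{2m+1}) = 0$, i.e. for $k \le 2m-1$, and is surjective whenever $\pi_k(S^{2m+1}) = 0$, i.e. for $k \le 2m$.

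Finally, I would assemble the steps and track the index ranges. For $k < 2n$, i.e. $k \le 2n-1$, every one-step inclusion $j_m$ with $m \ge n$ satisfies $k \le 2n-1 \le 2m-1$ and is therefore an isomorphism on $\pi_k$; composing them shows $G_k(\iota) = (j_{n,N})_*$ is an isomorphism. For $k = 2n$, the first step $j_n$ is only surjective (since $k = 2n \le 2n$ but $k \not\le 2n-1$), whereas every later step $j_m$ with $m \ge n+1$ satisfies $k = 2n \le 2m-1$ and is an isomorphism; the composite is then surjective.

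I do not expect a genuine obstacle, as the argument is classical once the setup is in place. The only points demanding care are the identification in the first paragraph, namely that padding by $0$ at the level of quasi-unitaries becomes padding by $I_{N-n}$ at the level of unitaries, so that $G_k(\iota)$ is exactly the standard inclusion; and the bookkeeping of the ranges through the telescope, which is precisely what forces the borderline case $k = 2n$ to degrade from an isomorphism to a mere surjection.
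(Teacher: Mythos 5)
Your proof is correct. Note that the paper does not actually prove this lemma; it is quoted directly from Mimura--Toda \cite[II, Corollary 3.17]{mimura}, and your argument --- identifying $G_k(\iota)$ with the standard block inclusion $\mathcal{U}_n(\C)\hookrightarrow\mathcal{U}_N(\C)$ via $u\mapsto 1-u$ and then running the long exact homotopy sequence of the bundle $\mathcal{U}_m(\C)\hookrightarrow\mathcal{U}_{m+1}(\C)\to S^{2m+1}$ through the telescope of one-step inclusions --- is precisely the classical proof underlying that citation. It is also consistent with the paper's own treatment of the rational analogue (\cref{inclusion}), which uses the same fibration but passes through the Serre spectral sequence in cohomology because it needs the ring structure rather than just the homotopy groups; for the integral statement at hand the long exact sequence suffices, and your bookkeeping of the ranges (isomorphism for $k\le 2m-1$, surjection for $k\le 2m$, hence the degradation exactly at $k=2n$) is accurate.
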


\begin{lemma}\label{inclusion}
For natural numbers $n<N$, let $\iota :M_n(\C)\hookrightarrow M_N(\C)$ denote the inclusion map $x\mapsto \text{diag}(x,0)$. Then, the induced map $F_m(\iota):F_m(M_n(\C))\rightarrow F_m(M_N(\C))$ is given by
\[
F_m(\iota) = \begin{cases}
\text{id} &: 1\leq m\leq 2n-1, m \text{ odd} \\
0 &: \text{ otherwise}
\end{cases}
\]
\end{lemma}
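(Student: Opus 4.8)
The plan is to split the argument according to the parity and size of $m$, disposing of the vanishing cases by a dimension count and treating the surviving cases by combining \cref{lem:inclusion_isomorphism} with the cohomological description in \cref{examplefinite}. First I would handle the ``otherwise'' case: if $m$ is even or $m > 2n-1$, then \cref{examplefinite} gives $F_m(M_n(\C)) = 0$, so $F_m(\iota)$ is forced to be the zero map regardless of the target. This leaves only the case $m = 2i-1$ with $1 \le m \le 2n-1$, in which both $F_m(M_n(\C))$ and $F_m(M_N(\C))$ are isomorphic to $\Q$.

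For the surviving case I would first note that $m \le 2n-1 < 2n$, so \cref{lem:inclusion_isomorphism} asserts that $G_m(\iota)$ is an isomorphism. Since tensoring with $\Q$ is functorial and preserves isomorphisms, $F_m(\iota) = G_m(\iota)\otimes\mathrm{id}_{\Q}$ is an isomorphism $\Q \to \Q$, i.e.\ multiplication by a nonzero rational scalar. This already pins down that the map is nonzero on exactly the advertised range, which is what is ultimately needed to compute the inductive limits of such diagrams up to isomorphism.

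To upgrade ``nonzero isomorphism'' to ``identity'' under the canonical identifications of \cref{examplefinite}, I would pass to cohomology. Using \cref{rem:quasi_unitary_unital}, the map $\U(\iota)\colon u \mapsto \mathrm{diag}(u,0)$ corresponds, under $u \mapsto 1-u$, to the stabilization $j\colon \mathcal{U}_n(\C) \to \mathcal{U}_N(\C)$, $v \mapsto \mathrm{diag}(v, 1_{N-n})$. By \cref{examplefinite} we have $H^{\ast}(\mathcal{U}_N(\C);\Q) = \wedge(x_1, x_3, \ldots, x_{2N-1})$, and the generators are compatible under stabilization, so that $j^{\ast}(x_{2i-1}) = x_{2i-1}$ for $i \le n$ while $j^{\ast}(x_{2i-1}) = 0$ for $i > n$. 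The naturality assertion in \cref{sullivan} then identifies $F_m(\iota)$ with the dual of $j^{\ast}$ restricted to the generating space $V$, which on the degree-$m$ line is precisely the identity.

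The main obstacle is this final step: \cref{lem:inclusion_isomorphism} by itself yields only an isomorphism, i.e.\ a nonzero scalar, and fixing that scalar to be $1$ genuinely requires the compatibility $j^{\ast}(x_{2i-1}) = x_{2i-1}$ of the cohomology generators of the unitary groups under stabilization, together with the naturality of Sullivan's minimal-model functor from \cref{sullivan}. Everything else---the vanishing cases and the fact that the map is an isomorphism on the nose in the correct range---is immediate from the results already cited.
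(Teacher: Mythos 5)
Your reduction is sound as far as it goes: disposing of the ``otherwise'' cases by noting that the domain $F_m(M_n(\C))$ already vanishes is cleaner than what the paper does, and invoking \cref{lem:inclusion_isomorphism} to see that $F_m(\iota) = G_m(\iota)\otimes\mathrm{id}_{\Q}$ is an isomorphism $\Q\to\Q$ in the range $1\leq m\leq 2n-1$, $m$ odd, is a legitimate shortcut that the paper does not use. If all one needed were the inductive limits up to abstract isomorphism, ``nonzero scalar'' would indeed suffice. But the precise normalization to the identity is what feeds into \cref{leq} and the multiplicity-matrix statement of \cref{finitedimthm}, so it cannot be waved away.

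The gap is exactly where you locate it, and you do not close it: the assertion that ``the generators are compatible under stabilization, so that $j^{\ast}(x_{2i-1}) = x_{2i-1}$ for $i\leq n$'' is the entire content of the paper's proof, not a citable consequence of \cref{examplefinite}, which only records the ring $H^{\ast}(\mathcal{U}_n(\C);\Q)\cong\wedge(x_1,\ldots,x_{2n-1})$ with no statement about how the generators for different $n$ are related. The paper establishes the compatibility by running the Leray--Serre spectral sequence of the fibration $\mathcal{U}_n\hookrightarrow \mathcal{U}_{n+1}\to S^{2n+1}$: the sequence collapses at $E_2$, so $H^m(\mathcal{U}_{n+1};\Q)\cong H^m(\mathcal{U}_n;\Q)$ for $m\leq 2n-1$ with $\iota^{\ast}$ the edge map, i.e.\ the identity on generators, and then \cref{sullivan} dualizes this to the identity on $\pi_m\otimes\Q$. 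To complete your argument you must either reproduce this computation (or the analogous one for general $N$ by iterating $N-n$ single-step inclusions), or replace the unreferenced claim with a precise citation pinning down that the chosen generators of $H^{\ast}(\mathcal{U}_N;\Q)$ restrict to the chosen generators of $H^{\ast}(\mathcal{U}_n;\Q)$; a priori an isomorphism of one-dimensional $\Q$-vector spaces is only multiplication by a nonzero rational, and nothing you have cited rules out a scalar other than $1$.
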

\begin{proof}
For simplicity we assume $N=n+1$, and consider the fibration $\mathcal{U}_n \hookrightarrow \mathcal{U}_{n+1} \to S^{2n+1}$, and the induced map $\iota^{\ast} : H^{\ast}(\mathcal{U}_{n+1};\mathbb{Q}) \to H^{\ast}(\mathcal{U}_{n};\mathbb{Q})$ given by
\[
\wedge(x_1,x_3,\hdots,x_{2n-1},x_{2n+1}) \xrightarrow{\iota^{\ast}} \wedge(y_1,y_3,\hdots,y_{2n-1})
\]
We now determine $\iota^{\ast}(x_m)$. For $m=2n+1$, by a theorem of Leray-Serre \cite[Theorem 5.2]{mccleary}, there is a first quadrant spectral sequence $\{E^{*,*}_r,d_r\}$ converging to $H^{\ast}(\mathcal{U}_{n+1};\Q)$, with 
\[
E^{p,q}_2\cong H^p(S^{2n+1};H^q(\mathcal{U}_n;\Q))
\]
Furthermore, by \cite[Example 1.C]{mccleary}, the sequence collapses at $E_2$, so we see that
\begin{equation*}
\begin{split}
H^{2n+1}(\mathcal{U}_{n+1};\mathbb{Q}) &= \bigoplus_{p+q=2n+1}E_\infty^{p,q} = \bigoplus_{p+q=2n+1}E_2^{p,q} \\
&= \bigoplus_{p+q=2n+1}H^p(S^{2n+1};\Q)\otimes H^q(\mathcal{U}_{n};\Q)) \\
&= H^{2n+1}(S^{2n+1};\Q)\bigoplus H^{2n+1}(\mathcal{U}_{n};\Q) \\
&= \Q x_{2n+1}\bigoplus H^{2n+1}(\mathcal{U}_{n};\mathbb{Q})
\end{split}
\end{equation*}
Thus, by \cite[Theorem 5.9]{mccleary}, $\iota^{\ast}:H^{2n+1}(\mathcal{U}_{n+1};\mathbb{Q}) \to H^{2n+1}(\mathcal{U}_{n};\mathbb{Q})$ is the projection map 
\[
\mathbb{Q}x_{2n+1}\bigoplus H^{2n+1}(\mathcal{U}_{n};\mathbb{Q})\twoheadrightarrow  E_\infty^{0,2n+1}= E_2^{0,2n+1}=H^{2n+1}(\mathcal{U}_{n};\mathbb{Q})
\]
Hence, $\iota^{\ast}(x_{2n+1})=0$. For $m\leq 2n-1$, by \cite[Example 1.C]{mccleary}, we have
\begin{equation*}
\begin{split}
H^m(\mathcal{U}_{n+1};\mathbb{Q}) &= {\bigoplus_{p+q=m}}E_\infty^{p,q} = \bigoplus_{p+q=m}E_2^{p,q} = \bigoplus_{p+q=m}H^p(S^{2n+1};\mathbb{Q})\otimes H^q(\mathcal{U}_n;\mathbb{Q}) \\
&= H^m(\mathcal{U}_n;\mathbb{Q})
\end{split}
\end{equation*}
Thus $\iota$ induces the identity map at the level of cohomology for $m\leq 2n-1$. We conclude that
\[
\iota^{\ast}(x_m) = \begin{cases}
y_m &: 0\leq m\leq 2n-1, m \text{ odd } \\
0 &: m=2n+1
\end{cases}
\]
By \cref{sullivan}, $\iota_{\ast} : \pi_m(\mathcal{U}_n)\otimes\mathbb{Q}\to \pi_m(\mathcal{U}_N)\otimes \mathbb{Q}$ is given by
\[
\iota_{\ast} = \begin{cases}
 \text{id} &: 1\leq m\leq 2n-1, m \text{ odd}\\
 0 &: \text{ otherwise}
 \end{cases}
\]
The result now follows from \cref{rem:quasi_unitary_unital}.
\end{proof}

The next lemma is well-known even for the functor $G_k$, but we include the proof for the sake of completeness.

\begin{lemma}\label{leq}
Fix integers $n,N,r\in \N$ such that $rn \leq N$, and consider the $\ast$-homomorphism $\eta :M_n(\C) \rightarrow M_N(\C)$ given by
\[
x\mapsto diag(\underbrace{x,x,x,\ldots, x}_{r\text{ times}},0)
\]
The map $F_m(\eta): F_m(M_n(\C))\to F_m(M_N(\C))$ satisfies
\[
F_m(\eta) = rF_m(\iota)
\] 
where $\iota : M_n(\C) \to M_N(\C)$ is the inclusion map $x\mapsto \text{diag}(x,0)$.
\end{lemma}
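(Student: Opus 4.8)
The plan is to transport everything to ordinary unitary groups and then exploit the fact that these are topological groups, so that pointwise multiplication induces addition on homotopy. By \cref{rem:quasi_unitary_unital}, under the identification $\U(M_k(\C)) \cong \mathcal{U}_k$ via $u \mapsto 1-u$, the map $\eta$ corresponds to the homomorphism of topological groups $\widetilde{\eta} : \mathcal{U}_n \to \mathcal{U}_N$ sending $v \mapsto \text{diag}(v,v,\ldots,v,1)$ ($r$ copies of $v$ followed by an identity block of size $N-rn$), while $\iota$ corresponds to $\widetilde{\iota} : v \mapsto \text{diag}(v,1)$. Since $F_m = \pi_m(\cdot)\otimes\Q$ and, for $m\geq 1$, the higher homotopy groups do not distinguish $\U$ from its identity component $\U_0$, it suffices to prove $\widetilde{\eta}_* = r\,\widetilde{\iota}_*$ on $\pi_m(\mathcal{U}_n)$ and then tensor with $\Q$.

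For $1\leq k\leq r$, let $j_k : \mathcal{U}_n \to \mathcal{U}_N$ be the map placing $v$ in the $k$-th diagonal block of size $n$ and the identity everywhere else, so that $j_1 = \widetilde{\iota}$. Because the $r$ blocks are pairwise disjoint, the matrices $j_1(v),\ldots,j_r(v)$ commute and their product in the group $\mathcal{U}_N$ is exactly $\widetilde{\eta}(v)$. Thus $\widetilde{\eta}$ is the pointwise product $j_1\cdot j_2\cdots j_r$, where $\cdot$ denotes the multiplication of $\mathcal{U}_N$.

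Next I would invoke the standard fact that for a path-connected topological group $G$ the multiplication $\mu : G\times G\to G$ induces addition $\mu_*(\alpha,\beta) = \alpha+\beta$ under $\pi_m(G\times G)\cong \pi_m(G)\oplus\pi_m(G)$; this follows from $\mu\circ i_1 = \mu\circ i_2 = \mathrm{id}_G$ for the two axis inclusions and is an instance of the Eckmann--Hilton principle. Applied inductively to the $r$-fold product, this gives $\widetilde{\eta}_* = (j_1)_* + (j_2)_* + \cdots + (j_r)_*$ on $\pi_m(\mathcal{U}_n)$. It then remains to see that all the $(j_k)_*$ coincide. Each $j_k$ differs from $j_1$ by conjugation by a permutation unitary $w_k\in\mathcal{U}_N$ interchanging the first and $k$-th blocks, that is, $j_k = \mathrm{Ad}(w_k)\circ j_1$. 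Since $\mathcal{U}_N$ is path-connected, a path from $1$ to $w_k$ yields a homotopy $\mathrm{Ad}(w_k(t))\circ j_1$ from $j_1$ to $j_k$, so $(j_k)_* = (j_1)_* = \widetilde{\iota}_*$. Hence $\widetilde{\eta}_* = r\,\widetilde{\iota}_*$, and tensoring with $\Q$ together with \cref{rem:quasi_unitary_unital} gives $F_m(\eta) = r\,F_m(\iota)$.

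The only genuinely delicate points are the two standard homotopy-theoretic inputs: that pointwise multiplication of maps into a topological group adds on homotopy groups, and that the block-permutation maps $j_k$ are mutually homotopic. Both are routine once one recalls that $\mathcal{U}_N$ is a connected topological group, and neither interacts with the rationalization, which is why the identity in fact already holds at the level of the integral functor $G_m$, as noted in the remark preceding the statement.
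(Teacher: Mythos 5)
Your proof is correct and follows essentially the same route as the paper: both reduce to ordinary unitary groups via \cref{rem:quasi_unitary_unital} and then use the fact that pointwise multiplication in the connected topological group $\mathcal{U}_N$ induces addition on $\pi_m$. The only cosmetic difference is that the paper packages the rearrangement of blocks as an application of Whitehead's Lemma (writing $\mathrm{diag}(f,\ldots,f,1)\sim\mathrm{diag}(f^r,1)$ and then using $[f^r]=r[f]$), whereas you unfold that step into the Eckmann--Hilton argument plus the conjugation homotopy $j_k=\mathrm{Ad}(w_k)\circ j_1$ --- the same underlying ingredients in a different order.
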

\begin{proof}
By \cref{rem:quasi_unitary_unital}, it suffices to show that $\eta_{\ast} = r \iota_{\ast}$ as maps between $\pi_m(\mathcal{U}_n(\C))$ and $\pi_m(\mathcal{U}_N(\C))$. To this end, let $[f]\in \pi_m(\mathcal{U}_n)$. Then, by Whitehead's Lemma \cite[Lemma 2.1.5]{rordam} applied to the algebra $M_N(C(S^m)\otimes M_n(\C))$, we have
\[
\eta_{\ast}([f])=[\text{diag}(\underbrace{f,f,\ldots,f}_{r\text{ times}},1)]=[diag(f^r,1)]
\]
Thus $\eta_{\ast}[f]=[\iota(f^r)]=\iota_{\ast}[f^r]=r\iota_{\ast}[f]$ as required.
\end{proof}

\begin{theorem}\label{finitedimthm}
For $k=1,2$, let $\overline{p^k} = (p^k_1, p^k_2, \ldots, p^k_{n_k})$ be two tuples of positive integers. Given a $\ast$-homomorphism $\varphi : M(\overline{p^1}) \to M(\overline{p^2})$ and $m\in \N$, we have
\[
F_m(M(\overline{p^k})) = \bigoplus_{j=1}^{n_k} \Q^{d(m,k,j)} \text{ where } d(m,k,j) = \begin{cases}
1 &: 0\leq m\leq 2p^k_j-1, m \text{ odd} \\
0 &: \text{ otherwise}
\end{cases}
\]
and $F_m(\varphi) : F_m(M(\overline{p^1})) \to F_m(M(\overline{p^2}))$ is given by multiplication by its multiplicity matrix $\Phi$.
\end{theorem}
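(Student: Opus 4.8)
The first assertion, identifying the groups $F_m(M(\overline{p^k}))$, is precisely the computation already recorded in \cref{examplefinite} via additivity of $F_m$, so the real content is the identification of $F_m(\varphi)$ with multiplication by $\Phi$. My plan is to reduce this to the two component lemmas, \cref{inclusion} and \cref{leq}, by using additivity to pass from $\varphi$ to its individual block maps $\varphi_{i,j}$.

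First I would record the matrix description of $F_m(\varphi)$. Write $\sigma_j : M_{p^1_j}(\C) \hookrightarrow M(\overline{p^1})$ for the canonical inclusions and $\pi_i : M(\overline{p^2}) \twoheadrightarrow M_{p^2_i}(\C)$ for the canonical projections. Since the quasi-unitary group of a direct sum splits as a product, $\U_0(B \oplus C) = \U_0(B) \times \U_0(C)$, the functor $F_m$ sends finite direct sums to direct sums in a way that carries the $\sigma_j$ and $\pi_i$ to the coordinate injections and projections. By functoriality, the $(i,j)$ component of $F_m(\varphi)$ is therefore $F_m(\pi_i \circ \varphi \circ \sigma_j) = F_m(\varphi_{i,j})$, so it suffices to show that each $F_m(\varphi_{i,j})$ is multiplication by $\ell_{i,j}$.

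Next I would compute a single block. By the structure theory of $\ast$-homomorphisms between matrix algebras underlying the diagram \eqref{eqn:diagram}, $\varphi_{i,j}$ is unitarily equivalent to the $\ell_{i,j}$-fold diagonal repetition map $x \mapsto \text{diag}(x,\ldots,x,0)$, that is, to the map $\eta$ of \cref{leq} with $r = \ell_{i,j}$; the hypothesis $\ell_{i,j} p^1_j \leq p^2_i$ needed there holds because $\sum_{j} \ell_{i,j} p^1_j \leq p^2_i$. As noted earlier, unitarily equivalent maps induce the same map on $F_m$, so \cref{leq} gives $F_m(\varphi_{i,j}) = \ell_{i,j} F_m(\iota)$, and \cref{inclusion} evaluates $F_m(\iota)$ as the identity on the nonzero summand. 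Assembling the blocks then yields $F_m(\varphi) = \Phi$.

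The one point I expect to require genuine care is the degree bookkeeping that makes ``multiplication by $\Phi$'' a well-defined map between the specific spaces $\bigoplus_j \Q^{d(m,1,j)}$ and $\bigoplus_i \Q^{d(m,2,i)}$: one must verify that a nonzero source summand can never be sent into a vanishing target summand. Concretely, whenever $d(m,1,j) = 1$ and $\ell_{i,j} \geq 1$ one has $p^2_i \geq \ell_{i,j} p^1_j \geq p^1_j$, so $m \leq 2p^1_j - 1 \leq 2p^2_i - 1$ forces $d(m,2,i) = 1$, and this is exactly the range in which \cref{inclusion} gives $F_m(\iota) = \text{id}$. Once this compatibility is checked, the remaining assembly of the blocks into the matrix $\Phi$ is purely formal.
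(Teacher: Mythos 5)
Your proof is correct and rests on the same two pillars as the paper's argument (\cref{inclusion} and \cref{leq}), but the reduction is organized differently. You decompose $F_m(\varphi)$ into its full matrix of blocks at once: since $\U_0$ turns finite direct sums of C*-algebras into products, $F_m$ carries the canonical inclusions $\sigma_j$ and projections $\pi_i$ to coordinate injections and projections, so the $(i,j)$ entry of $F_m(\varphi)$ is $F_m(\pi_i\circ\varphi\circ\sigma_j)=F_m(\varphi_{i,j})$, and each block is then handled directly by \cref{leq} and \cref{inclusion}. The paper instead decomposes only the target, uses \cref{leq} to reduce to the case of a map $(x,y)\mapsto\mathrm{diag}(x,y)$ with all multiplicities equal to $1$, writes that map as an orthogonal product $(\iota_1\circ p_1)\cdot(\iota_2\circ p_2)$ so that $F_m(\varphi)=F_m(\iota_1)F_m(p_1)+F_m(\iota_2)F_m(p_2)$, and then computes $F_m(p_j)$ explicitly via the K\"unneth isomorphism and \cref{sullivan}. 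Your route is leaner: the K\"unneth/Sullivan computation of $F_m(p_j)$ is subsumed by additivity. Do note, however, that the one non-formal input in your block decomposition is the identity $\mathrm{id}_{F_m(\oplus_j B_j)}=\sum_j F_m(\sigma_j)F_m(\mathrm{pr}_j)$, which is justified by exactly the orthogonal-product observation the paper makes explicit; it deserves a sentence. Your closing check that a nonzero source summand never feeds a vanishing target summand (because $\ell_{i,j}\geq 1$ forces $p^2_i\geq p^1_j$) is handled only implicitly in the paper's case analysis, and isolating it is a genuine improvement in clarity. The only microscopic omission is the case $\ell_{i,j}=0$, where \cref{leq} does not literally apply but $\varphi_{i,j}=0$ trivially induces the zero map.
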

\begin{proof}
The first part of the theorem is \cref{examplefinite}, so we compute $F_m(\varphi)$. Replacing $\varphi$ by a map that is unitarily equivalent (and hence homotopic) to $\varphi$, we may assume that $\varphi$ is represented by a diagram as in \cref{eqn:diagram}. Writing $\varphi=(f_1,f_2,\hdots,f_{n_k})$ where $f_i:M(\overline{p^1})\rightarrow M(\overline{p_i^2})$, it suffices to prove the theorem for a map 
\[
\varphi:M_{n_1}\oplus M_{n_2}\oplus\hdots\oplus M_{n_k}\rightarrow M_{\ell}
\]
given by $1\times k$ matrix $[r_1,r_2,\hdots,r_k]$. By \cref{leq}, it suffices to prove the result when $k=2$ and $r_1=r_2=1$. So we consider the map
\[
\varphi:M_{n_1}(\C) \oplus M_{n_2}(\C)\to  M_{\ell}(\C) \text{ given by } (x,y)\mapsto \text{diag}(x,y)
\]
where $n_1 \leq n_2$ and show that $F_m(\varphi)$ is given by multiplication by matrix $[1,1]$. To do this, consider the natural inclusion and projection maps
\begin{equation*}
\begin{split}
\iota_1 : M_{n_1}(\C) \hookrightarrow M_{\ell}(\C),\qquad  & \iota_2 : M_{n_2}(\C) \hookrightarrow M_{\ell}(\C) \\
p_1 : M_{n_1}(\C) \oplus M_{n_2}(\C) \to M_{n_1}(\C), \text{ and } & p_2 : M_{n_1}(\C) \oplus M_{n_2}(\C) \to M_{n_2}(\C)
\end{split}
\end{equation*}
Then $\varphi = (\iota_1\circ p_1)\cdot(\iota_2\circ p_2)$. Hence,
\[
F_m(\varphi)=F_m(\iota _1\circ p_1)+F_m(\iota_2\circ p_2) = F_m(\iota_1)F_j(p_1) + F_m(\iota_2)F_j(p_2)
\]
By \cref{inclusion}, it suffices to calculate $F_m(p_j)$. To begin with, we calculate $(p_j)^{\ast}$ between the respective cohomology rings. For this, note that
\begin{equation*}
\begin{split}
H^{\ast}(\mathcal{U}_{n_1};\Q)\xrightarrow{(p_1)^{\ast}} H^{\ast}(\mathcal{U}_{n_1}\oplus\mathcal{U}_{n_2};\Q) & \cong H^{\ast}(\mathcal{U}_{n_1};\Q)\otimes H^{\ast}(\mathcal{U}_{n_2}; \Q)\\
(p_{1})^{\ast}(a)\smile (p_{2})^{\ast}(b) & \leftrightarrow a\otimes b
\end{split}
\end{equation*}
where the above isomorphism is the Kunneth isomorphism, where $p_1^{\ast}(a)\mapsto a\otimes 1$. Thus $p_1^{\ast} : H^{\ast}(\mathcal{U}_{n_1};\mathbb{Q}) \to H^{\ast}(\mathcal{U}_{n_1};\Q)\otimes H^{\ast}(\mathcal{U}_{n_2};\Q)$ is given by
\begin{equation*}
\begin{split}
\wedge(x_1,x_3,\hdots,x_{2n_1-1}) &\to \wedge(x_1,x_3,\hdots,x_{2n_1-1})\otimes\wedge(y_1,y_3,\hdots,y_{2n_2-1})\\
x_i &\mapsto x_i\otimes 1 
\end{split}
\end{equation*}
for all $1\leq i\leq 2n_1-1$, where $i$ is odd. Thus by \cref{sullivan}, the associated linear map $(p_1)^{\ast}$ is given by
\[
(p_1)^{\ast}: \Q\to \Q\oplus\Q = \begin{cases}
r\mapsto (r,0) &: 0\leq i\leq 2n_1-1, i \text{ odd} \\
0 &: \text{ otherwise}
\end{cases}
\]
Dualizing, we get
\[
F_m(p_1)= (p_1)_{\ast}: \mathbb{Q}\oplus\mathbb{Q}\rightarrow\mathbb{Q} = \begin{cases}
(r,r')\mapsto r &: 0\leq m\leq 2n_1-1, m \text{ odd} \\
0 &: \text{ otherwise}
\end{cases}
\]
Similarly,
\[
F_m(p_2) = (p_2)_{\ast}: \mathbb{Q}\oplus\mathbb{Q}\rightarrow\mathbb{Q} = \begin{cases}
(r,r')\mapsto r' &: 0\leq m\leq 2n_2-1, m \text{ odd} \\
0 &: \text{ otherwise}
\end{cases}
\]
Thus combining the above two and the expressions for $F_m(\iota_j)$ from \cref{inclusion}, we get
\[
F_m(\varphi)=\begin{cases}
\mathbb{Q}\oplus\mathbb{Q}\to\mathbb{Q}&: 0\leq m\leq 2n_1-1, \text{ and odd}\\  
(r,r')\mapsto r+r' \\
\mathbb{Q}\to\mathbb{Q}&: 2n_1< m\leq 2n_2-1, \text{ and odd}\\  
r\mapsto r \\
0 &: \text{ otherwise}
\end{cases}
\]
Hence, $F_m(\varphi)$ is given by multiplication by matrix $[1,1]$ as required.
\end{proof}

Given a $\ast$-homomorphism $\varphi : A_1\to A_2$ between two finite dimensional C*-algebras, and $m \in \N$, we now get a diagram of $\Q$-vector spaces which represents the induced map $F_m(\varphi) : F_m(A_1) \to F_m(A_2)$:
\[
\xymatrix{
d(m,1,1) \ar@2{-}[d]\ar@3{-}[rd] & d(m,1,2)\ar@{-}[d]\ar@{-}[rd] & \ldots & d(m,1,n_1)\ar@{-}[d]\ar@2{-}[ld] \\
d(m,2,1) & d(m,2,2) & \ldots & d(m,2,n_2)
}
\]
where the number of lines connecting $d(m,1,j)$ to $d(m,2,i)$ is $\ell_{i,j}$, given by \cref{eqn:multiplicity}. We denote this diagram by
\[
\mathcal{D}_m(A_1, A_2, \varphi)
\]

\begin{example}
To illustrate the above result, we give an example. Let
\[
A_1 = \C\oplus M_2(\C)\oplus M_3(\C), \text{ and } A_2 = \C\oplus M_3(\C)\oplus M_5(\C)\oplus M_8(\C)
\]
and $\varphi : A_1\to A_2$ is given by
\[
\varphi(x,y,z) := \left(x,\begin{pmatrix}
x & 0 \\
0 & y
\end{pmatrix}, \begin{pmatrix}
x & 0 & 0 \\
0 & x & 0 \\
0 & 0 & z
\end{pmatrix} \begin{pmatrix}
y & 0 & 0 \\
0 & z & 0 \\
0 & 0 & z
\end{pmatrix} \right)
\]
Then $\mathcal{D}(A_1,A_2,\varphi)$ is 
\[
\xymatrix{
1\ar@{-}[d]\ar@{-}[rd]\ar@2{-}[rrd] & 2\ar@{-}[d]\ar@{-}[rrd] & 3\ar@{-}[d]\ar@2{-}[rd] \\
1 & 3 & 5 & 8
}
\]
And the multiplicity matrix is
\[
\Phi = \begin{pmatrix}
1 & 0 & 0 \\
1 & 1 & 0 \\
2 & 0 & 1 \\
0 & 1 & 2
\end{pmatrix}
\]
We now consider the various diagrams $\mathcal{D}_m(A_1, A_2, \varphi)$:
\begin{itemize}
\item For $m\notin \{1,3,5\}$, the domain $F_m(A_1)$ is the zero vector space by \cref{examplefinite}, so the diagrams are not represented.
\item For $m=1$, $\mathcal{D}_1(A_1, A_2, \varphi)$ is
\[
\xymatrix{
1\ar@{-}[d]\ar@{-}[rd]\ar@2{-}[rrd] & 1\ar@{-}[d]\ar@{-}[rrd] & 1\ar@{-}[d]\ar@2{-}[rd] \\\
1 & 1 & 1 & 1
}
\]
Hence, $F_1(\varphi): \Q^3 \to \Q^4$ is given by $(a,b,c) \mapsto (a,a+b,2a+c,b+2c)$.
\item For $m=3$, $d(3,1,1) = d(3,2,1) = 0$, so $\mathcal{D}_3(A_1, A_2, \varphi)$ is
\[
\xymatrix{
0\ar@{-}[d]\ar@{-}[rd]\ar@2{-}[rrd] & 1\ar@{-}[d]\ar@{-}[rrd] & 1\ar@{-}[d]\ar@2{-}[rd] \\\
0 & 1 & 1 & 1
}
\]
so that $F_3(\varphi) : \Q^2 \to \Q^3$ is the map $(b,c) \mapsto (b,c,b+2c)$.
\item For $m=5$, $d(5,1,1)= d(5,1,2) = d(5,2,1) = 0 $, so $\mathcal{D}_5(A_1, A_2, \varphi)$ is
\[
\xymatrix{
0\ar@{-}[d]\ar@{-}[rd]\ar@2{-}[rrd] & 0\ar@{-}[d]\ar@{-}[rrd] & 1\ar@{-}[d]\ar@2{-}[rd] \\\
0 & 1 & 1 & 1
}
\]
so that $F_5(\varphi) : \Q\to \Q^3$ is the map $c \mapsto (0,c,2c)$.
\end{itemize}
\end{example}

Using the continuity of the functors $F_m(\cdot)$, \cref{mainthm:one} now follows.
 
\begin{theorem}
Given a labelled Bratteli diagram
\[
\mathcal{D}(A) = \{ (A_n, A_{n+1}, \varphi_n) :n\in \N\}
\]
defining an AF-algebra $A$, and given $m\in \N$, there is a corresponding diagram
\[
\mathcal{D}_m(A) = \{\mathcal{D}_m(A_n, A_{n+1}, \varphi_n) : n\in \N\}
\]
of $\Q$-vector spaces such that the inductive limit of the chain system defined by $\mathcal{D}_m(A)$ is isomorphic to $F_m(A)$.
\end{theorem}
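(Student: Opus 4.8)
The plan is to assemble the statement directly from the two ingredients already established: the continuity of the homology theory $F_m$ and the finite-dimensional computation of \cref{finitedimthm}. First I would recall that, by definition of an AF-algebra, the labelled Bratteli diagram $\mathcal{D}(A)$ exhibits $A$ as an inductive limit $A = \lim(A_n, \varphi_n)$ in the category of C*-algebras. Since $F_m$ is a continuous homology theory, applying $F_m$ to this limit yields a natural isomorphism
\[
F_m(A) \cong \lim \left( F_m(A_n),\, F_m(\varphi_n) \right)
\]
in the category of abelian groups.

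Next I would identify this directed system of abelian groups with the chain system encoded by $\mathcal{D}_m(A)$. By \cref{examplefinite} and \cref{finitedimthm}, for each $n$ the group $F_m(A_n)$ is precisely the $\Q$-vector space $\bigoplus_{j} \Q^{d(m,n,j)}$ recorded by the $n^{\text{th}}$ row of $\mathcal{D}_m(A)$, and the connecting homomorphism $F_m(\varphi_n)$ is multiplication by the multiplicity matrix $\Phi_n$ of $\varphi_n$ — which is exactly the $\Q$-linear map read off from the diagram $\mathcal{D}_m(A_n, A_{n+1}, \varphi_n)$. Thus the directed system $\{F_m(A_n), F_m(\varphi_n)\}$ coincides, term by term and map by map, with the chain system determined by $\mathcal{D}_m(A)$.

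Finally I would observe that the inductive limit of this system computed in the category of $\Q$-vector spaces agrees with the one computed in abelian groups: since all transition maps are $\Q$-linear and the forgetful functor from $\Q$-vector spaces to abelian groups preserves filtered colimits, the limit of the chain system is naturally a $\Q$-vector space, and it is isomorphic to $F_m(A)$ by the displayed isomorphism above.

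There is no serious obstacle here, since the entire topological and analytic content has already been absorbed into the continuity of $F_m$ and into \cref{finitedimthm}; what remains is a bookkeeping argument. The only point deserving a word of care is the last one — confirming that forming the inductive limit over $\Q$ rather than over $\Z$ does not alter the answer — and this is the standard fact that filtered colimits commute with the relevant forgetful functor.
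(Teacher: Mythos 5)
Your proposal is correct and follows exactly the route the paper takes: the paper simply invokes the continuity of $F_m$ together with \cref{finitedimthm} and states the theorem without further argument. Your write-up merely spells out the identification of the directed system $\{F_m(A_n), F_m(\varphi_n)\}$ with the chain system of $\mathcal{D}_m(A)$ and the harmless passage between colimits of abelian groups and of $\Q$-vector spaces, both of which are implicit in the paper.
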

 
We illustrate the above calculation with another example.

\begin{example}\label{example:af_calculation}
Consider the AF-algebra $A$ given by the Bratteli diagram below:
\[
\xymatrix{
\bullet^{1}\ar@{-}[d]\ar@{-}[rd] & \bullet^{1}\ar@{-}[d] \\
\bullet^{2}\ar@{-}[d]\ar@{-}[rd] & \bullet^{2}\ar@{-}[d] \\
\bullet^{3}\ar@{-}[d]\ar@{-}[rd] & \bullet^{4}\ar@{-}[d] \\
\vdots & \vdots
}
\]
We denote the algebra associated to each row by $A_k$, and the natural connecting map $A_k\to A_m$ by $\varphi_{m,k}$ for $m\geq k$. For any $m \in \N$ odd, there is a row $n$, such that, for all $k\geq n$,
\[
F_m(A_k) \cong \Q\oplus \Q
\]
Furthermore, the map $F_m(\varphi_{k+1,k}) : F_m(A_k) \to F_m(A_{k+1})$ is given by multiplication by the matrix
\[
\Phi = \begin{pmatrix}
1 & 0 \\
1 & 1
\end{pmatrix}
\]
Hence,
\[
F_m(A) \cong \begin{cases}
\Q\oplus \Q &: m \text{ odd } \\
0 &: m \text{ even}
\end{cases}
\]
\end{example}
\section{$K$-stability}

The notion of K-stability given below is due to Thomsen \cite[Definition 3.1]{thomsen}, and that of rational K-stability has been studied by Farjoun and Schochet \cite[Definition 1.2]{farjoun}, where it was termed rational Bott-stability.

\begin{definition}\label{defn:k_stable}
Let $A$ be a $C^*$-algebra and $j\geq 2$. Define $\iota_j: M_{j-1}(A)\to M_j(A)$ to be the natural inclusion map
\[
a\mapsto\begin{pmatrix}
a&0\\
0&0
\end{pmatrix}
\]
\begin{itemize}
\item $A$ is said to be K-stable if $G_k(\iota_j): G_k(M_{j-1}(A))\to G_k(M_j(A))$ is an isomorphism for all $k\geq 0$ and all $j\geq 2$.
\item $A$ is said to be rationally K-stable if $F_m(\iota_j):F_m(M_{j-1}(A))\to F_m(M_j(A))$ is an isomorphism for all $m\geq 1$ and all $j\geq 2$.
\end{itemize}
 
\end{definition}
Note that, for a $K$-stable C*-algebra, $G_k(A) \cong K_{k+1}(A)$ and for a rationally $K$-stable C*-algebra, $F_m(A) \cong K_{m+1}(A)\otimes \Q$. Clearly, $K$-stability implies rational $K$-stability. We now show for the class of $AF$-algebras both these notions are equivalent. \\

Once again, we do this using a Bratteli diagram. To this end, we need the following notion. For a finite dimensional C*-algebra $A$, write
\[
\min\dim(A) := \min\{\text{square root of the dimension of a simple summand of } A\}
\]
In other words, if $A \cong M(\overline{p})$ for a tuple $\overline{p} = (p_1,p_2,\ldots, p_n)$ of positive integers, then $\min\dim(A) = \min\{p_j\}$. Note that this number may also be defined intrinsically using extremal traces and minimal central projections, but we give this definition for the sake of simplicity. \\

The next result is a simple consequence of \cref{examplefinite} and the continuity of the functor $F_m$.

\begin{lemma}\label{lem:even_zero}
Let $A$ be an AF-algebra and $m\in \N$ be an even number, then $F_m(A) = 0$.
\end{lemma}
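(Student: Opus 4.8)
The plan is to reduce the statement to the finite-dimensional computation already in hand and then invoke continuity. Since $A$ is an AF-algebra, I would begin by writing $A = \lim A_i$ as an inductive limit of finite dimensional C*-algebras $A_i$. Each $A_i$ is isomorphic to some $M(\overline{p})$, so by \cref{examplefinite} (equivalently, the first part of \cref{finitedimthm}) we have $F_m(A_i) = \bigoplus_{j} \Q^{d(m,j)}$, where the exponent $d(m,j)$ equals $1$ only when $m$ is odd (and $0 \leq m \leq 2p_j - 1$) and vanishes otherwise. The hypothesis that $m$ is even therefore forces $d(m,j) = 0$ for every $j$, so $F_m(A_i) = 0$ for each $i$. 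This is the only real observation: the parity constraint built into $d(m,\cdot)$ already annihilates every finite-dimensional stage of the system.

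It then remains only to pass to the limit. Because $F_m$ is a continuous homology theory, it commutes with inductive limits in the sense recalled before the continuity proposition, so $F_m(A) = \lim F_m(A_i) = \lim 0 = 0$, which is the desired conclusion. I do not expect any genuine obstacle here, since continuity does all of the remaining work once each $F_m(A_i)$ is seen to vanish. The only point meriting a word of care is to confirm that the continuity of $F_m$ applies to the particular inductive system realizing $A$ as an AF-algebra, but this is precisely the setting in which $F_m$ was established to be continuous, so no extra argument is needed.
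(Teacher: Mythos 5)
Your argument is correct and is exactly the one the paper intends: the lemma is stated there as "a simple consequence of \cref{examplefinite} and the continuity of the functor $F_m$," which is precisely your reduction to the vanishing of $F_m$ on each finite-dimensional stage followed by passage to the inductive limit. No gaps.
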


\begin{lemma}\label{lem:exact}
Let $m \in \N$ be an odd number, then the functor $F_m$ is exact on the class of AF-algebras.
\end{lemma}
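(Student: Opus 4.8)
The plan is to read off exactness directly from the long exact sequence furnished by the homology-theory structure of $F_m$, using only the vanishing result of \cref{lem:even_zero}. The one structural input I would record first is that the class of AF-algebras is closed under the operations appearing in a short exact sequence: if $B$ is an AF-algebra and $J \subset B$ is a closed two-sided ideal, then both $J$ and the quotient $B/J$ are again AF-algebras (see \cite[Section III.4]{davidson}). It therefore suffices to fix a short exact sequence
\[
0 \to J \xrightarrow{\iota} B \xrightarrow{p} A \to 0
\]
with $B$, and hence all three algebras, AF, and to prove that $0 \to F_m(J) \to F_m(B) \to F_m(A) \to 0$ is exact.

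Since $F_m$ is a continuous homology theory, the sequence above induces a long exact sequence
\[
\cdots \to F_{m+1}(A) \xrightarrow{\partial} F_m(J) \xrightarrow{F_m(\iota)} F_m(B) \xrightarrow{F_m(p)} F_m(A) \xrightarrow{\partial} F_{m-1}(J) \to \cdots
\]
The crux is a parity observation: when $m$ is odd, the two neighbouring degrees $m+1$ and $m-1$ are even, so \cref{lem:even_zero}, applied to the AF-algebras $A$ and $J$, gives $F_{m+1}(A) = 0$ and, for $m \geq 3$, $F_{m-1}(J) = 0$. Exactness at $F_m(J)$ then forces $F_m(\iota)$ to be injective, exactness at $F_m(A)$ forces $F_m(p)$ to be surjective, and exactness at $F_m(B)$ gives $\operatorname{im} F_m(\iota) = \ker F_m(p)$; together these are precisely the assertion that the three-term sequence is short exact.

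The only degree not covered by this blanket argument is $m = 1$, where the right-hand neighbour is $F_0(J)$ rather than an $F_{\text{even}}$ of positive degree. I would dispose of it by extending the vanishing to degree $0$: writing $F_0 = G_0 \otimes \Q$, one has $G_0(M(\overline{p})) = \pi_0\big(\prod_{j} \mathcal{U}_{p_j}(\C)\big) = 0$ for every finite-dimensional algebra (a product of connected unitary groups), whence continuity of $G_0$ gives $G_0(J) = 0$ and so $F_0(J) = 0$ for the AF-algebra $J$, and surjectivity of $F_1(p)$ follows as before. I do not expect a genuine obstacle anywhere: the whole content is the observation that an odd degree $m$ is flanked by two even degrees on which $F$ vanishes, and the only points needing a sentence of care are invoking that $J$ and $A$ are AF (so that \cref{lem:even_zero} applies) and disposing of this degree-$0$ term.
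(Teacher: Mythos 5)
Your proof is correct and follows essentially the same route as the paper's: pass to the long exact sequence supplied by the homology-theory structure of $F_m$ and observe that the two flanking even-degree terms vanish by \cref{lem:even_zero}. Your separate treatment of the boundary case $m=1$, where the right-hand neighbour is $F_0(J) = G_0(J)\otimes\Q = 0$ because the quasi-unitary group of an AF-algebra is connected, is a detail the paper's proof passes over silently, and is a worthwhile addition.
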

\begin{proof}
Given a short exact sequence $0 \to I \to A\to A/I \to 0$ of AF-algebras, we consider the long exact sequence in the $G_m$ functors from \cite[Theorem 2.5]{thomsen}. Tensoring with $\Q$, the long exact sequence remains exact. Now observe that if $m\in \N$ is odd, then
\[
F_{m+1}(A/I) = F_{m-1}(I) = 0
\]
by \cref{lem:even_zero}. Hence the result.
\end{proof}

\begin{lemma}\label{lem:injective}
For any AF-algebra $A$ and $n\in \N$, the natural inclusion $\iota_A : A\to M_n(A)$ induces an injective map
\[
F_m(\iota_A) : F_m(A) \to F_m(M_n(A))
\]
\end{lemma}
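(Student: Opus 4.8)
The plan is to reduce to the finite-dimensional situation settled by \cref{inclusion} and then pass to the limit, using that directed colimits of vector spaces are exact. First I would fix an inductive limit decomposition $A = \lim A_k$ with each $A_k$ finite dimensional and connecting maps $\varphi_k : A_k \to A_{k+1}$. Since matrix amplification commutes with inductive limits (indeed $M_n(-) \cong (-)\otimes M_n(\C)$), one has $M_n(A) = \lim M_n(A_k)$ with connecting maps $M_n(\varphi_k)$. The natural inclusions $\iota_{A_k} : A_k \to M_n(A_k)$, $a \mapsto \text{diag}(a,0)$, satisfy $M_n(\varphi_k)\circ \iota_{A_k} = \iota_{A_{k+1}}\circ \varphi_k$, so they constitute a morphism of inductive systems whose limit is exactly $\iota_A$. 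Applying the continuous functor $F_m$ and using naturality of the limit, I obtain $F_m(\iota_A) = \lim F_m(\iota_{A_k})$ as a map of inductive systems of $\Q$-vector spaces.

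Next I would check that each $F_m(\iota_{A_k})$ is injective. Writing $A_k \cong M(\overline{p})$ for a tuple $\overline{p} = (p_1,\ldots,p_s)$, one has $M_n(A_k) \cong M_{np_1}(\C)\oplus\cdots\oplus M_{np_s}(\C)$, and $\iota_{A_k}$ decomposes as the direct sum over the simple summands of the inclusions $M_{p_\ell}(\C)\hookrightarrow M_{np_\ell}(\C)$, $x\mapsto \text{diag}(x,0)$. By additivity of $F_m$ it suffices to treat a single such inclusion, and here \cref{inclusion} shows that $F_m$ of it is either the identity (when $1\leq m\leq 2p_\ell-1$ with $m$ odd) or the zero map. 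By \cref{examplefinite} the domain $F_m(M_{p_\ell}(\C))$ is nonzero precisely in the former case, so in either case the map is injective. Hence $F_m(\iota_{A_k})$, being a finite direct sum of injective maps, is injective.

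The step I expect to be the crux is the passage to the limit: injectivity is not preserved by colimits in general. However, directed (filtered) colimits are exact in the category of $\Q$-vector spaces, so the inductive limit of a system of injective maps is again injective, and therefore $F_m(\iota_A) = \lim F_m(\iota_{A_k})$ is injective. The genuine content of the argument thus lies in combining this exactness with the compatibility $M_n(\varphi_k)\circ \iota_{A_k} = \iota_{A_{k+1}}\circ \varphi_k$, which is what allows $\varinjlim F_m(\iota_{A_k})$ to be identified with $F_m(\iota_A)$. (For even $m$ the statement is anyway immediate, since $F_m(A) = 0$ by \cref{lem:even_zero}.)
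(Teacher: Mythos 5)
Your proposal is correct and follows essentially the same route as the paper: reduce to simple summands via additivity and \cref{inclusion} for the finite-dimensional case, then pass to the inductive limit using continuity of $F_m$ and the compatibility $M_n(\varphi_k)\circ\iota_{A_k}=\iota_{A_{k+1}}\circ\varphi_k$. The only cosmetic difference is that you invoke exactness of filtered colimits of $\Q$-vector spaces where the paper carries out the corresponding element chase explicitly.
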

\begin{proof}
If $A = M_k(\C)$, then
\[
F_m(A) = \begin{cases}
\Q &: 0\leq m\leq 2k-1, j \text{ odd} \\
0 &: \text{ otherwise}
\end{cases}
\]
And if $0\leq m\leq 2k-1$, the map $(\iota_A)_{\ast} : \pi_m(\mathcal{U}_k(\C)) \to \pi_m(\mathcal{U}_{nk}(\C))$ is an isomorphism. Hence, $F_m(\iota_A)$ is an isomorphism for $0\leq m\leq 2k-1$, and is clearly injective otherwise. By additivity, the result is true if $A$ is finite dimensional. \\

Now if $A$ is an infinite dimensional AF-algebra, then the result follows by continuity of the functor $F_m$. To see this, write $A = \lim (A_k, \varphi_k)$, where $A_k$ are finite dimensional with connecting maps $\varphi_{\ell,k} : A_k\to A_{\ell}$ for $\ell \geq k$, and let $\alpha_k : A_k\to A$ denote $\ast$-homomorphisms such that $\alpha_{k+1}\circ \varphi_k = \varphi_{k+1}$ for all $k\in \N$. Then by continuity of the functor $F_m$, we have
\[
F_m(A) \cong \lim (F_m(A_k), F_m(\varphi_k))
\]
So if $x\in F_m(A)$ such that $F_m(\iota_A)(x) = 0$, there exists $k\in \N$ and $y\in F_m(A_k)$ such that $x = F_m(\alpha_k)(y)$. Let $\alpha_k^{(n)} : M_n(A_k)\to M_n(A)$ denote the inflation of $\alpha_k$, then $\alpha_k^{(n)}\circ \iota_{A_k} = \iota_A \circ \alpha_k$. Hence,
\[
F_m(\alpha_k^{(n)})\circ F_m(\iota_{A_k})(y)= F_m(\iota_A)\circ F_m(\alpha_k)(y) = F_m(\iota_A)(x) = 0
\]
But $F_m(M_n(A)) \cong \lim (F_m(M_n(A_k)), F_m(\varphi_k^{(n)}))$, so there exists $\ell\geq k$ such that $F_m(\varphi_{\ell,k}^{(n)})\circ F_m(\iota_{A_k})(y) = 0$. But $\varphi_{\ell,k}^{(n)}\circ \iota_{A_k} = \iota_{A_{\ell}}\circ \varphi_{\ell,k}$. This implies
\[
F_m(\iota_{A_{\ell}})\circ F_m(\varphi_{\ell,k})(y) = 0
\]
But $A_{\ell}$ is finite dimensional, so $F_m(\iota_{A_{\ell}})$ is injective by the first part of the proof. Hence, $F_m(\varphi_{\ell,k})(y) = 0$, which implies
\[
x = F_m(\alpha_k)(y) = F_m(\alpha_{\ell})\circ F_m(\varphi_{\ell,k})(y) = 0
\]
Thus, $F_m(\iota_A)$ is injective as required.
\end{proof}

Let $A$ be an AF-algebra given as an inductive limit
\[
A_1 \xrightarrow{\varphi_1} A_2 \xrightarrow{\varphi_2} A_3\to \ldots \to A
\]
where each $A_p$ is finite dimensional. If each $\varphi_p$ is injective, then we say that $\{A_p\}$ is a \textit{generating nest} of finite dimensional C*-algebras.  \\

Let $\mathcal{D}(A)$ be the corresponding Bratteli diagram. For each $m\in \N$, we write $\mathcal{D}(A;m)$ for the $m^{th}$ level of $\mathcal{D}(A)$ (corresponding to the algebra $A_m$). For a node $(p,i) \in \mathcal{D}(A)$,  we write $(p,i) = n$ if the $i^{th}$ summand of $A_p$ is $M_n(\C)$. For two nodes $(p,i)$ and $(p+1,j) \in \mathcal{D}(A)$, we write $(p,i)\searrow (p+1,j)$ is there is an edge connecting these two nodes. If this happens, we say that $(p+1,j)$ is a successor of $(p,i)$, and that $(p,i)$ is a predecessor of $(p+1,j)$. We say that a node $(p,i)$ is an \textit{orphan} if it has no predecessors (in other words, there does not exist any $j$ such that $(p-1,j)\searrow (p,i)$).

\begin{definition}\label{defn: k_chain}
Let $K \in \N$ be a positive integer. A subset $\Lambda \subset \mathcal{D}(A)$ is called a \textit{$K$-chain} if there exists $M \in \N$ and $N \in \N\cup\{+\infty\}$ such that the following conditions hold:
\begin{enumerate}
\item $\Lambda\cap \mathcal{D}(A;j) = \emptyset$ for all $j<M$ and $j>N$. Furthermore, $|\Lambda\cap \mathcal{D}(A;j)| = 1$ for all $M\leq j\leq N$. Write $\Lambda = \{(p,i_p) : M\leq p\leq N\}$.
\item For each $M\leq p\leq N, (p,i_p)\searrow (p+1,i_{p+1})$.
\item Furthermore, if $(p,i)\searrow (p+1,i_{p+1})$ for any $i$, then $i=i_p$.
\item $(p,i_p) = K$ for all $M\leq p\leq N$.
\end{enumerate}
If $N < \infty$, then $\Lambda$ is said to be a \textit{terminating} $K$-chain. If $N = +\infty$, then $\Lambda$ is said to be an \textit{infinite} $K$-chain. 
\end{definition}

We should mention that condition (3) of the above definition is crucial. It says that every node in $\Lambda$ (barring the first one) has precisely one predecessor in $\mathcal{D}(A)$, and that predecessor must also be in $\Lambda$.

\begin{lemma}\label{lem: k_chain}
Let $\mathcal{D}(A)$ be a Bratteli diagram associated to an AF-algebra $A$ and $K \in \N$. If $\mathcal{D}(A)$ has an infinite $K$-chain, then $A$ has an ideal $I$ such that $A/I\cong M_K(\C)$.
\end{lemma}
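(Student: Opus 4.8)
The plan is to exhibit the ideal $I$ as the kernel of a surjective $*$-homomorphism $\pi : A \to M_K(\C)$, so that $A/I \cong M_K(\C)$ follows from the first isomorphism theorem for $C^*$-algebras. Writing $A = \varinjlim (A_p, \varphi_p)$, I note that the tail $\{A_p : p \geq M\}$ is cofinal, so by the universal property of the inductive limit it suffices to build a compatible family of surjective $*$-homomorphisms $\pi_p : A_p \to M_K(\C)$ satisfying $\pi_{p+1}\circ \varphi_p = \pi_p$ for all $p \geq M$.

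Write $\Lambda = \{(p,i_p) : p \geq M\}$ for the infinite $K$-chain. For each $p \geq M$ let $q_p : A_p \to M_{(p,i_p)}(\C) = M_K(\C)$ be the projection onto the $i_p$-th simple summand, which is a surjective $*$-homomorphism by condition (4). The key computation is $q_{p+1}\circ \varphi_p$. Decomposing $a \in A_p$ over the summands of $A_p$ and using that $q_{p+1}\circ \varphi_p$ restricted to the $j$-th summand is the partial map $\varphi_{i_{p+1},j}$, one gets $q_{p+1}(\varphi_p(a)) = \sum_j \varphi_{i_{p+1},j}(q_p^{(j)}(a))$, the sum running over predecessors of $(p+1,i_{p+1})$. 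By condition (3) the only predecessor is $(p,i_p)$, so this collapses to $\varphi_{i_{p+1},i_p}(q_p(a))$, i.e. $q_{p+1}\circ \varphi_p = \psi_p \circ q_p$ with $\psi_p := \varphi_{i_{p+1},i_p} : M_K(\C) \to M_K(\C)$.

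Next I would argue that $\psi_p$ is an isomorphism. Its multiplicity $\ell_{i_{p+1},i_p}$ is nonzero by condition (2), and since its image lies in $M_K(\C)$ one has $\ell_{i_{p+1},i_p}\cdot K \leq K$, forcing $\ell_{i_{p+1},i_p} = 1$. A multiplicity-one map sends the rank-$K$ unit to a rank-$K$ projection, hence is unital; being a nonzero $*$-homomorphism out of the simple algebra $M_K(\C)$ it is injective, and a dimension count makes it surjective, so $\psi_p$ is an isomorphism (indeed the identity in the normalized form of \cref{eqn:diagram}). Setting $\Theta_M := \mathrm{id}$, $\Theta_{p+1} := \Theta_p \circ \psi_p^{-1}$, and $\pi_p := \Theta_p \circ q_p$, I get $\pi_{p+1}\circ \varphi_p = \Theta_{p+1}\circ \psi_p \circ q_p = \Theta_p \circ q_p = \pi_p$, so the family is compatible; each $\pi_p$ is surjective as a composite of $q_p$ with an isomorphism. (In diagram-normalized form every $\psi_p$ is the identity and one may simply take $\pi_p = q_p$.)

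The induced map $\pi : A \to M_K(\C)$ then satisfies $\pi(A) \supseteq \pi_p(A_p) = M_K(\C)$, hence is surjective, and $I := \ker \pi$ is the required ideal with $A/I \cong M_K(\C)$. I expect the only genuinely content-bearing step — the one place where the full strength of the $K$-chain definition is used — to be the verification that $\psi_p$ is an isomorphism, since it is precisely there that conditions (2), (3) and (4) combine to force the partial connecting map to have multiplicity one and to be unital. Once that is established, both the compatibility of the family and the surjectivity of $\pi$ are formal, and no appeal to continuity of $F_m$ is needed beyond the universal property of the limit.
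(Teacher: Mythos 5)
Your proof is correct, but it takes a genuinely different route from the paper's. The paper never constructs a homomorphism onto $M_K(\C)$ directly: it shows that the complement $\mathcal{S} = \mathcal{D}(A)\setminus\Lambda$ is a directed, hereditary subset of the Bratteli diagram (directedness uses condition (3), heredity uses condition (2)), invokes the standard correspondence between such subsets and ideals of $A$ to obtain $I$, and then observes that $\Lambda$ is the Bratteli diagram of $A/I$, a tower of copies of $M_K(\C)$ with identity connecting maps. You instead build a compatible family of surjections $\pi_p : A_p \to M_K(\C)$ and take $I = \ker\pi$ for the induced map on the limit. The two arguments use the hypotheses in exactly parallel ways --- your factorization $q_{p+1}\circ\varphi_p = \psi_p\circ q_p$ is precisely the statement that nothing outside $\Lambda$ feeds into it (condition (3)), and your verification that $\psi_p$ is a unital isomorphism is where conditions (2) and (4) enter, just as they do in the paper's identification of the quotient diagram. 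What your approach buys is self-containedness: it replaces the ideal--subdiagram correspondence for AF-algebras (which the paper uses without explicit citation) by an elementary intertwining computation and the universal property of the inductive limit, and it produces the quotient map explicitly. What it costs is a little bookkeeping (the telescoping isomorphisms $\Theta_p$, the cofinality of the tail $p\geq M$), all of which you handle correctly. The multiplicity argument forcing $\ell_{i_{p+1},i_p}=1$ and hence $\psi_p$ unital and bijective is sound.
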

\begin{proof}
Assume without loss of generality that $M=1$ in the definition of $\Lambda$, and set $\mathcal{S} := \mathcal{D}(A)\setminus \Lambda$. Then we claim that $\mathcal{S}$ is a directed, hereditary set. To see that $\mathcal{S}$ is directed, let $(p,i) \in \mathcal{S}$ and $(p,i)\searrow (p+1,j)$. If $(p+1,j)\in \Lambda$, then then by condition (3) of \cref{defn: k_chain} , it follows that $(p,i) \in \Lambda$ as well. This is a contradiction, so $(p+1,j)\in \mathcal{S}$ as well. Hence, $\mathcal{S}$ is directed. To see that $\mathcal{S}$ is hereditary, let $(p,i) \in \mathcal{D}$ such that, for any $j, (p,i)\searrow (p+1,j)$ implies that $(p+1,j) \in \mathcal{S}$. Then, we wish to prove that $(p,i) \in \mathcal{S}$. Suppose not, then $i=i_p$. In that case, by condition (2), $(p,i)\searrow (p+1,i_{p+1})$, so $(p+1,i_{p+1}) \in \mathcal{S}$. This is a contradiction. Hence, $(p,i) \in \mathcal{S}$ as required. \\

Hence, there is an ideal $I\vartriangleleft A$ such that the diagram of $A/I$ is given by $\Lambda$. However, by conditions (1) and (4), each node of $\Lambda$ represents the algebra $M_K(\C)$. Upto unitary equivalence, there is only one map $M_K(\C)\to M_K(\C)$, namely the identity map. Hence, $A/I \cong M_K(\C)$.
\end{proof}

Before we proceed, we fix some notation: Let $C$ be a finite dimensional C*-algebra, and $j\in \N$. Define $C^{(j)}$ to be the direct sum of all simple summands of $C$ of dimension equal to $j^2$ (We adopt the convention that the direct sum over an empty index set is the zero C*-algebra). Similarly, $C^{(>j)}$ denotes the direct sum of all simple summands of $C$ whose dimension is $>j^2$, and $C^{(<j)}$ is the direct sum of all simple summands of $C$ whose dimension is $<j^2$.  Hence,
\[
C = C^{(<j)}\oplus C^{(j)}\oplus C^{(>j)}
\] 
Given a diagram $\mathcal{D}(A) =\{(A_n, A_{n+1}, \varphi_n): n\in \N\}$, a node $(p,i) \in \mathcal{D}(A)$ and a level $m\geq p$, we write
\[
(p,i)\hookrightarrow A_m^{(>j)}
\]
if every arrow emanating from $(p,i)$ lands in a node corresponding to a summand of $A_m^{(>j)}$. Finally, given a Bratteli diagram $\mathcal{D}(A)$ as above and integers $m\geq n$, we write $\varphi_{m,n} := \varphi_{m-1}\circ \varphi_{m-2}\circ \ldots \circ\varphi_n : A_n\to A_m$ with the convention that $\varphi_{n,n} = \text{id}_{A_n}$. \\

\begin{lemma}\label{lem: min_dim_large}
Let $A$ be an AF-algebra with no non-zero finite dimensional representations. Then, for each $m \in \N$, there is a generating nest $\{A_{m,p} : p \in \N\}$ of finite dimensional C*-algebras such that
\[
\min\dim(A_{m,p}) \geq m
\]
for all $p \in \N$.
\end{lemma}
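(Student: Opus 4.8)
The plan is to begin with any generating nest $A_1 \xrightarrow{\varphi_1} A_2 \xrightarrow{\varphi_2} \cdots \to A$ (every AF-algebra admits one, since we may replace each $A_p$ by its image in $A$) with associated Bratteli diagram $\mathcal{D}(A)$, and to build the desired nest by discarding the ``small'' summands at each level. For a finite dimensional C*-algebra $C$ write $C^{\geq m} := C^{(>m-1)}$ for the direct sum of those simple summands of matrix size $\geq m$. The first ingredient is a monotonicity property of injective diagrams: if $\ell_{i,j}\geq 1$ is the multiplicity of an edge $(p,j)\searrow (p+1,i)$, then the size constraint $\ell_{i,j}\,p_j \leq p_i$ forces $p_i \geq p_j$, so matrix sizes are non-decreasing along every edge. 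In particular a summand of size $\geq m$ can never map into a summand of size $<m$, whence $\varphi_p(A_p^{\geq m}) \subseteq A_{p+1}^{\geq m}$ \emph{automatically}, with no projection required. Thus the subalgebras $A_p^{\geq m}\subseteq A$ are already nested, the restricted connecting maps are injective, and each has $\min\dim \geq m$.

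The heart of the argument, and the step I expect to be the main obstacle, is to show that $\overline{\bigcup_p A_p^{\geq m}} = A$; equivalently, that every small node is eventually absorbed into the large summands, i.e.\ for each node $(p,i)$ of size $<m$ there is a level $q\geq p$ with $(p,i)\hookrightarrow A_q^{\geq m}$. I would prove this by contradiction, via \cref{lem: k_chain}. If some small node failed this, the tree of its small descendants would be infinite; since $\mathcal{D}(A)$ is finitely branching, K\"onig's lemma yields an infinite path of small nodes. Along such a path the sizes are non-decreasing and bounded by $m-1$, hence eventually equal to a constant $K<m$. On each edge of the path where the size is constant, $\ell\cdot K \leq K$ forces $\ell = 1$ and a full (unital) embedding $M_K(\C)\to M_K(\C)$; fullness uses the entire size $K$ of the successor summand, leaving no room for any other predecessor, so the successor has the path node as its unique predecessor. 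This is precisely condition (3) of \cref{defn: k_chain}, while conditions (1), (2) and (4) are immediate, so the tail of the path is an infinite $K$-chain. By \cref{lem: k_chain}, $A$ then has an ideal $I$ with $A/I\cong M_K(\C)$, a non-zero finite dimensional representation, contradicting the hypothesis.

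Granting the absorption claim, the construction is routine. As each level has finitely many nodes, for every $p$ there is $Q(p)$ with $\varphi_{q,p}(A_p)\subseteq A_q^{\geq m}$ for all $q\geq Q(p)$ (large summands stay large by monotonicity, and by the claim each small summand of $A_p$ eventually maps entirely into large summands). Choosing any strictly increasing cofinal sequence $q_1<q_2<\cdots$ and setting $A_{m,k} := A_{q_k}^{\geq m}$, the monotonicity observation makes $\{A_{m,k}\}$ a nest with injective connecting maps and $\min\dim(A_{m,k})\geq m$, while $\varphi_{q_k,p}(a)\in A_{m,k}$ for $q_k\geq\max(p,Q(p))$ gives $\bigcup_p A_p \subseteq \overline{\bigcup_k A_{m,k}}$, so that $\overline{\bigcup_k A_{m,k}} = A$. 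Hence $\{A_{m,k}\}$ is the required generating nest. The only point demanding care is that the absorption step genuinely uses the no-finite-dimensional-representation hypothesis, entering solely through \cref{lem: k_chain}; everything else is forced by the size constraints on injective $*$-homomorphisms.
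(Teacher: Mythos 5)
Your proof is correct, and it reaches the conclusion by a genuinely different route even though it leans on the same crucial input, \cref{lem: k_chain}, as the paper does. The paper proceeds by induction on $m$, peeling off the summands of minimal size one value at a time: at each stage it locates levels where every minimal-size node is an orphan, and then assembles a one-sided intertwining (with explicit projection maps onto the large part, invoking Exercise 6.8 of R{\o}rdam--Larsen--Laustsen) to identify the limit of the truncated algebras with $A$. You instead observe that matrix sizes are non-decreasing along every edge of an injective diagram (since $\ell_{i,j}p_j\leq p_i$ whenever $\ell_{i,j}\geq 1$), so the subalgebras $A_p^{\geq m}$ are literally nested inside $A$ with no projection or intertwining needed, and the whole problem reduces to the single density claim that every small summand is eventually absorbed. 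Your extraction of the infinite $K$-chain is also organized differently: rather than starting chains at specific nodes and arguing they terminate, you run K\"onig's lemma on the tree of small descendants and then check that the eventually-constant tail of the resulting path satisfies condition (3) of \cref{defn: k_chain} via the same multiplicity count ($\ell K\leq K$ forces $\ell=1$ and leaves no room for other predecessors). The net effect is an argument that handles all sizes below $m$ in one pass, avoids the approximate-intertwining machinery entirely, and makes visible that the hypothesis on finite dimensional representations enters only through \cref{lem: k_chain}; the paper's inductive version is longer but produces the intermediate diagrams $\mathcal{D}^{(m)}(A)$ explicitly, which some readers may find more concrete.
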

\begin{proof}
We begin with any Bratteli diagram associated to $A$
\[
\mathcal{D}^{(1)}(A) = \{(A_{1,p}, A_{1,p+1}, \varphi^{(1)}_p) : p\in \N\}
\]
and we inductively construct diagrams
\[
\mathcal{D}^{(m)}(A) = \{(A_{m,p}, A_{m,p+1}, \varphi^{(m)}_p): p\in \N\}
\]
such that, for each $m, p\in \N$, $\min\dim(A_{m,p}) \geq m$, and the connecting maps $\varphi^{(m)}_p$ are all injective.\\

Since $\min\dim(A_{1,p}) \geq 1$ for all $p\in \N$, we assume that we have constructed $\mathcal{D}^{(i)}(A)$ for $1\leq i\leq m$, and we now construct $\mathcal{D}^{(m+1)}(A)$. If $\min\dim(A_{m,p}) \geq m+1$ for all but finitely many $p\in \N$, then we simply take $\mathcal{D}^{(m+1)}(A) = \mathcal{D}^{(m)}(A)$, by ignoring the first finitely many terms and appropriately relabelling the objects and maps. \\

Therefore, we assume without loss generality that $\min\dim(A_{m,p}) = m$ for infinitely many values of $p$, and that $\min\dim(A_{m,1}) = m$. Now consider the nodes appearing in $A_{m,1}^{(m)}$, and enumerate them as
\[
(1,1), (1,2), \ldots, (1,s)
\]
For each $1\leq i\leq s$, we may begin a $m$-chain starting at $(1,i)$. Since $\min\dim(A_{m,j})\geq m$ for all $j\in \N$ and all the connecting maps are injective, any such chain will satisfy condition (3) of \cref{defn: k_chain}. Since $A$ does not have any non-zero finite dimensional representations, any such chain must terminate by \cref{lem: k_chain}. Hence, there exists $m_i \in \N$ such that $(1,i)\hookrightarrow A_{m,m_i}^{(>m)}$. Since the connecting maps are injective, this implies that
\[
(1,i)\hookrightarrow A_{m,k}^{(>m)}
\]
for all $k\geq m_i$. Since $\min\dim(A_{m,p}) = m$ for infinitely many values of $p\in \N$, we choose $n_1 \geq \max\{m_1, m_2, \ldots, m_s\}$ such that $A_{m,n_1}^{(m)}\neq 0$. Then,
\[
(1,i)\hookrightarrow A_{m,n_1}^{(>m)}
\]
for all $1\leq i\leq s$, and therefore every node of $A_{m,n_1}^{(m)}$ is an orphan. Starting with these nodes, and repeating the above argument, we obtain a subsequence $\{n_j\}_{j=1}^{\infty}$ such that, for each $j\in \N$, every node of $A_{m,n_j}^{(m)}$ is an orphan. Consider $\iota_j : A_{m,n_j}^{(>m)}\to A_{m,n_j}$ and $\pi_j : A_{m,n_j}\to A_{m,n_j}^{(>m)}$ to be the natural inclusion and quotient maps respectively. Then, it follows that
\[
\iota_j\circ \pi_j\circ \varphi^{(m)}_{n_j,n_{j-1}} = \varphi^{(m)}_{n_j,n_{j-1}}
\] 
for all $j\geq 1$, with the convention that $n_0 = 1$. Hence, the following diagram commutes
\[
\xymatrix{
\ldots\ar[r] & A_{m,n_{j-1}}\ar[rr]^{\varphi^{(m)}_{n_j,n_{j-1}}}\ar[d]_{\pi_j\circ\varphi^{(m)}_{n_j,n_{j-1}}} && A_{m,n_j}\ar[d]^{\pi_{j+1}\circ\varphi^{(m)}_{n_{j+1},n_j}}\ar[r]^{\varphi^{(m)}_{n_{j+1},n_j}} & \ldots \\
\ldots\ar[r] & A_{m,n_j}^{(>m)}\ar[rr]_{\pi_{j+1}\circ\varphi_{n_{j+1},n_j}\circ \iota_j}\ar[rru]^{\iota_j} && A_{m,n_{j+1}}^{(>m)}\ar[r] & \ldots
}
\]
We set $(A_{m+1,j}, \varphi^{(m+1)}_j)$ to be the terms in the lower row. Then, it follows from \cite[Exercise 6.8]{rordam}, that $\lim (A_{m+1,j}, \varphi^{(m+1)}_j) \cong A$. Furthermore, by construction, we have $\min\dim(A_{m+1,j})\geq m+1$ for all $j\in \N$. Finally, $\iota_{j+1}\circ \varphi^{(m+1)}_j = \varphi^{(m)}_{n_{j+1},n_j}\circ \iota_j$. Since each $\varphi^{(m)}_k$ is assumed to be injective, it follows that $\varphi^{(m+1)}_j$ is injective as well.
\end{proof}

We are now in a position to prove \cref{mainthm:two}.

\begin{theorem}\label{thm:k_stable}
For an $AF$-algebra $A$, the following are equivalent:
\begin{enumerate}
\item $A$ is $K$-stable
\item $A$ is rationally $K$-stable
\item $A$ has no non-zero finite dimensional representations.
\item For each $m \in \N$, there is a generating nest $\{A_{m,p} : p \in \N\}$ of finite dimensional C*-algebras such that
\[
\min\dim(A_{m,p}) \geq m
\]
for all $p \in \N$.
\end{enumerate}
\end{theorem}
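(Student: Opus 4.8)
The plan is to prove the cycle of implications $(1)\Rightarrow(2)\Rightarrow(3)\Rightarrow(4)\Rightarrow(1)$. The implication $(1)\Rightarrow(2)$ is immediate from the definitions, since $F_m(\iota_j)=G_m(\iota_j)\otimes\mathrm{id}_{\Q}$ and tensoring an isomorphism with $\Q$ yields an isomorphism; and $(3)\Rightarrow(4)$ is exactly \cref{lem: min_dim_large}. Thus only $(4)\Rightarrow(1)$ and $(2)\Rightarrow(3)$ require work.

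For $(4)\Rightarrow(1)$, I would fix $k\geq 0$ and $j\geq 2$ and choose an integer $m$ with $2m>k$. Using the generating nest $\{A_{m,p}\}$ provided by (4), I would write $M_{j-1}(A)=\lim_p M_{j-1}(A_{m,p})$ and $M_j(A)=\lim_p M_j(A_{m,p})$, so that by continuity of the homology theory $G_k$ the map $G_k(\iota_j)$ for $A$ is the inductive limit of the maps $G_k(\iota_j)$ for the finite-dimensional stages. Writing $A_{m,p}\cong M(\overline{q})$, the inclusion $\iota_j$ splits as a direct sum of block inclusions $M_{(j-1)q_i}(\C)\hookrightarrow M_{jq_i}(\C)$; since $q_i\geq\min\dim(A_{m,p})\geq m$ together with $j\geq 2$ give $2(j-1)q_i\geq 2m>k$, \cref{lem:inclusion_isomorphism} shows each block inclusion induces an isomorphism on $G_k$, and hence so does their direct sum by additivity. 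Passing to the limit shows $G_k(\iota_j)$ is an isomorphism for $A$; as $k$ and $j$ were arbitrary, $A$ is $K$-stable.

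The substantive step is $(2)\Rightarrow(3)$, which I would prove by contraposition. If $A$ has a non-zero finite-dimensional representation, its image is a non-zero finite-dimensional $C^*$-algebra, which surjects onto some $M_K(\C)$; composing produces an ideal $I$ with $A/I\cong M_K(\C)$. The key observation is that $M_K(\C)$ is itself not rationally $K$-stable: taking $j=2$ and the odd integer $m=2K+1$, \cref{examplefinite} gives $F_m(M_K(\C))=0$ while $F_m(M_{2K}(\C))\cong\Q$, so $F_m(\iota_2)$ cannot be surjective. To transport this failure to $A$, I would apply \cref{lem:exact} to the matrix-amplified short exact sequences of AF-algebras $0\to M_n(I)\to M_n(A)\to M_n(M_K(\C))\to 0$ for $n=1,2$; since $m$ is odd, the induced quotient maps on $F_m$ are surjective. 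These fit into a commuting square whose vertical arrows are the inclusions $\iota_2$ for $A$ and for $M_K(\C)$ and whose horizontal arrows are the surjective quotient maps. A one-line diagram chase then shows that surjectivity of $F_m(\iota_2)$ for $A$ forces surjectivity of $F_m(\iota_2)$ for $M_K(\C)$, contradicting the previous sentence. Hence if $A$ is rationally $K$-stable it admits no such quotient, which is exactly (3).

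I expect this last step to be the main obstacle. The delicate points are verifying the naturality of the square (so that the inflated quotient maps genuinely intertwine the two copies of $\iota_2$) and pinning down the single pair $(j,m)=(2,2K+1)$ at which $M_K(\C)$ demonstrably violates rational $K$-stability; once these are in place, the exactness of $F_m$ in odd degrees and the diagram chase finish the argument. Everything else reduces to additivity, continuity, and the inclusion and exactness lemmas already established.
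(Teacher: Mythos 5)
Your proposal is correct and follows essentially the same route as the paper: the same cycle of implications, the same use of \cref{lem: min_dim_large}, the same diagram chase through the $F_m$-exact sequence of $0\to I\to A\to M_K(\C)\to 0$ and its matrix amplification for $(2)\Rightarrow(3)$ (the paper allows any $n>1$ and any odd $m$ with $2K-1<m\leq 2nK-1$, where you fix $n=2$, $m=2K+1$), and the same reduction of $(4)\Rightarrow(1)$ to blockwise applications of \cref{lem:inclusion_isomorphism} followed by a passage to the inductive limit.
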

\begin{proof} Note that $(1)\Rightarrow (2)$ is obvious, and $(3)\Rightarrow (4)$ is the content of \cref{lem: min_dim_large}. Therefore, we prove $(2)\Rightarrow (3)$ and $(4)\Rightarrow (1)$. \\

$(2)\Rightarrow (3)$: Suppose $A$ has a non-zero finite dimensional representation, then there is a proper ideal $I< A$ such that $Q := A/I$ is finite dimensional. By taking a further quotient if need be, we may assume without loss of generality that $Q = M_K(\C)$ for some $K\in \N$. Now fix $n > 1$ and consider the commuting diagram
\[
\xymatrix{
0\ar[r] & I\ar[r]\ar[d] & A\ar[r]\ar[d] & Q\ar[d]\ar[r] & 0 \\
0\ar[r] & M_n(I)\ar[r] & M_n(A)\ar[r] & M_n(Q)\ar[r] & 0
}
\]
Choose an odd number $m$ such that
\[
2K - 1 < m \leq 2nK-1
\]
and applying the functor $F_m$ to this diagram, we get an induced diagram of $\Q$-vector spaces by \cref{lem:exact}
\[
\xymatrix{
0\ar[r] & F_m(I)\ar[r]\ar[d]^{F_m(\iota_I)} & F_m(A)\ar[r]\ar[d]^{F_m(\iota_A)} & 0\ar[d]^{F_m(\iota_Q)}\ar[r] & 0 \\
0\ar[r] & F_m(M_n(I))\ar[r] & F_m(M_n(A))\ar[r] & \Q \ar[r] & 0
}
\]
Since $F_m(\iota_Q)$ is the zero map (and hence not surjective), it follows that $F_m(\iota_A)$ cannot be surjective. This contradicts the assumption that $A$ is rationally $K$-stable. \\

$(4)\Rightarrow (1)$: Fix $k\in \N\cup\{0\}$ and $j\geq 2$. By hypothesis, there is a generating nest $\{A_p : p\in \N\}$ of finite dimensional subalgebras such that
\[
\min\dim(A_p)\geq \left\lceil \frac{k+1}{2}\right\rceil
\]
for all $p\in \N$. We want to show that the map $G_k(\iota_j) : G_k(M_{j-1}(A))\to G_k(M_j(A))$ is an isomorphism. Now, $M_{j-1}(A)$ is an inductive limit of the algebras $\{M_{j-1}(A_p)\}$ which also satisfy the condition that
\[
\min\dim(M_{j-1}(A_p))\geq \left\lceil \frac{k+1}{2}\right\rceil
\]
for all $p\in \N$. Therefore, replacing $M_{j-1}(A)$ by $A$, it suffices to show that, for each $n\in \N$, the inclusion map $\iota_A : A\to M_n(A)$ induces an isomorphism $G_k(\iota_A)$. \\

Now fix $n\in \N$. Then, for each simple summand $M_{\ell}(\C) \hookrightarrow A_p$, we have $k\leq 2\ell-1$, so that the inclusion map $\iota_{M_{\ell}(\C)} : M_{\ell}(\C) \to M_{n\ell}(\C)$ induces an isomorphism $G_k(\iota_{M_{\ell}(\C)}) : G_k(M_{\ell}(\C))\to G_k(M_{n\ell}(\C))$ by \cref{lem:inclusion_isomorphism}. Hence, $G_k(\iota_{A_p})$ is an isomorphism for all $p\in \N$. By taking limits, we conclude that $G_k(\iota_A)$ is also an isomorphism as required.
\end{proof}

We now connect our results to those of Thomsen \cite{thomsen}. Recall that an ordered Abelian group $(G,G^+)$ is said to have \textit{large denominators} if, for all $a\in G^+$ and $n\in \N$, there exists $b\in G^+$ and $m\in \N$ such that
\[
nb\leq a \leq mb
\]
In \cite[Theorem 4.5]{thomsen}, Thomsen proves that, if $A$ is an AF-algebra such that $K_0(A)$ has large denominators, then $A\otimes B$ is $K$-stable for all C*-algebras $B$. Using \cref{mainthm:two}, we are partially able to recover this result.

\begin{corollary}{\cite[Theorem 4.5]{thomsen}}
If $A$ is an AF-algebra such that $K_0(A)$ has large denominators, then $A$ is $K$-stable.
\end{corollary}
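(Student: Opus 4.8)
The plan is to deduce $K$-stability from \cref{thm:k_stable}. Since that theorem shows conditions $(1)$ and $(3)$ are equivalent for an AF-algebra, it suffices to prove that an AF-algebra $A$ whose ordered group $K_0(A)$ has large denominators admits no non-zero finite dimensional representation. I would argue by contradiction, translating the hypothetical representation into an order-theoretic statement about $K_0(A)$ that clashes directly with the large-denominators property.

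So suppose $A$ has a non-zero finite dimensional representation. Composing with the projection onto a simple summand of its image, I may assume it is a non-zero $\ast$-homomorphism $\pi : A \to M_K(\C)$ for some $K\in \N$. Applying the functor $K_0$ and using the standard identification $K_0(M_K(\C)) \cong \Z$, under which the class of a projection is sent to its rank, $\pi$ induces an ordered-group homomorphism $\tau := K_0(\pi) : K_0(A) \to \Z$. Because $A$ is an AF-algebra, $K_0(A)^+$ is generated by classes of projections lying in $A$ itself, and since $\pi \neq 0$ there is such a projection $p$ with $\pi(p)\neq 0$; hence $\tau([p]) = \operatorname{rank}(\pi(p)) \geq 1$, so $\tau$ is a non-zero positive homomorphism.

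With $\tau$ in hand the contradiction is immediate. Fix $a \in K_0(A)^+$ with $c := \tau(a) \geq 1$, and choose any integer $n > c$. By the large-denominators hypothesis there are $b \in K_0(A)^+$ and $m\in \N$ with $nb \leq a \leq mb$. Applying the positive homomorphism $\tau$ gives $n\,\tau(b) \leq c \leq m\,\tau(b)$; the right-hand inequality forces $\tau(b) \geq 1$, since $\tau(b)$ is a non-negative integer and $c\geq 1$, whence the left-hand inequality yields $n \leq n\,\tau(b) \leq c$, contradicting $n > c$. Therefore $A$ has no non-zero finite dimensional representation, and $K$-stability follows from \cref{thm:k_stable}.

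I expect the only genuinely delicate point to be the verification that $\tau$ is non-zero and integer-valued, that is, that a non-zero finite dimensional representation really does produce a non-zero positive homomorphism $K_0(A)\to \Z$; this relies on the fact that for AF-algebras the positive cone of $K_0$ is detected by projections in the algebra itself. Once that is set up, the large-denominators property collapses the situation by an elementary integrality argument, and no further information from the Bratteli diagram is needed.
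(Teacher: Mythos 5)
Your proposal is correct and follows essentially the same route as the paper: both reduce to showing, via \cref{thm:k_stable}, that large denominators rule out non-zero finite dimensional representations, and both derive the contradiction by applying the induced positive homomorphism $K_0(A)\to\Z$ to the inequality $nb\leq a\leq mb$. The only cosmetic differences are that the paper fixes $n=k+1$ and cites R\o rdam--St\o rmer for the non-vanishing of $K_0(\varphi)$, whereas you take an arbitrary $n>c$ and justify the non-vanishing directly via projections.
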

\begin{proof}
Suppose $\varphi : A\to M_K(\C)$ is a non-zero finite dimensional representation, then
\[
K_0(\varphi) : (K_0(A),K_0(A)^+) \to (\Z,\Z^+)
\]
is a homomorphism of ordered Abelian groups. Furthermore, since $\varphi \neq 0$, $K_0(\varphi) \neq 0$ by \cite[Theorem 1.3.4]{rordam_stormer}. Hence, there exists $a\in K_0(A)^+$ such that $k:= K_0(\varphi)(a) > 0$. By hypothesis, there exists $b\in K_0(A)$ and $m\in \N$ such that
\[
(k+1)b \leq a \leq mb
\]
Hence, $(k+1)K_0(\varphi)(b) \leq k$, which implies $K_0(\varphi)(b) = 0$. However, this contradicts the fact that $mK_0(\varphi)(b)\geq k$. Consequently, $A$ does not have any non-zero finite dimensional representations, and is hence $K$-stable by \cref{mainthm:two}.
\end{proof}

\begin{example} Consider the AF-algebra $A$ given by the Bratteli diagram in \cref{example:af_calculation}. The sub-diagram consisting of the column on the right defines an ideal $I$, so this algebra is not simple. In fact, $K_0(A)$ does not have large denominators. To see this, we denote the maps $A_k\to A$ by $\alpha_k$. Now, let $e \in A_1$ denote a minimal projection in the first summand $M_1(\C)$ of $A_1$. and let $a:= [\alpha_1(e)] \in K_0(A)$. Suppose that there exists $b\in K_0(A)^{+}$ and $m\in \N$ such that
\[
2b \leq a \leq mb
\]
Then choose $k\in \N$ and a projection $p \in A_k$ such that $b = [\alpha_k(p)]$. Also, $A_k = M_k(\C)\oplus M_{n_k}(\C)$ where $n_k = 1+\frac{k(k-1)}{2}$. So choose a normalized extremal trace $\tau$ on the first summand of $A_k$, then it follows that
\[
2\tau(p) \leq \tau(\varphi_{k,1}(e)) \leq m\tau(p)
\]
Now by construction $\tau(\varphi_{k,1}(e)) = \frac{1}{k}$. Hence, $\tau(p) \leq \frac{1}{2k}$, which implies that $\tau(p) = 0$. This cannot happen because $m\tau(p)\geq \frac{1}{k}$ as well. Thus, $K_0(A)$ does not have large denominators. \\

However, $A$ is $K$-stable because condition (4) of \cref{thm:k_stable} holds.
\end{example}

\subsection*{Acknowledgements}
The first named author is supported by UGC Junior Research Fellowship No. 1229 and the second named author was partially supported by SERB Grant YSS/2015/001060. The authors would like to thank the referee for a careful reading of the manusript, and for pointing out an error in an earlier version of the paper.


\begin{thebibliography}{CS79}


\bibitem{blackadar} {\scshape Blackadar, Bruce.}	K-theory for operator algebras. Second. Vol.{\bf 5}. {\em Mathematical Sciences Research Institute Publications. Cambridge University Press, Cambridge,} 1998. 
	
\bibitem{bousfield} {\scshape Bousfield, A.K.; Gugenheim, V.K.A.M.} On PL de Rham theory and rational homotopy type.	{\em Memoirs of the American Mathematical Society.} {\bf 179} (1976).
	
\bibitem{bratteli}{\scshape Bratteli, Ola.} Inductive limits of finite dimensional C*-algebras. {\em Transactions of the American Mathematical Society} {\bf 171} (1972), 195-234. 
	
\bibitem{davidson} {\scshape Davidson, K.R.} $C^{\ast}$-algebras by example. Vol. {\bf 6}. 	{\em Fields Institute Monographs. American Mathematical Society, Providence, RI,} 1996.

\bibitem{sullivan}{\scshape Deligne, P.; Griffiths, P.; Morgan, J.; Sullivan, D.}	Real homotopy theory of K\"{a}hler manifolds.	{\em Inventiones Mathematicae} {\bf 29.3} (1975), 245-274.
	
\bibitem{farjoun} {\scshape Farjoun, E.D.; Schochet, C.L.} Spaces of sections of Banach algebra bundles, {\em Journal of K-Theory. K-Theory and its Applications in Algebra, Geometry, Analysis and Topology } {\bf 10.2} (2012), 279-298. 

\bibitem{handelman} {\scshape Handelman, David.} $K_0$ of von Neumann and AF $C^{\ast}$-algebras. {\em In: The Quarterly Journal of Mathematics.} Oxford. Second Series {\bf 29.116} (1978), 427-441.
	
\bibitem{mccleary} {\scshape McCleary, John.} User's guide to spectral sequences. Vol. {\bf 12}. {\em Mathematics Lecture Series. Publish or Perish Inc., Wilmington, DE,} 1985. 

\bibitem{mimura} {\scshape Mimura, Mamoru; Toda, Hirosi.} Topology of Lie groups. I, II. Vol. {\bf 91}. {\em Translations of Mathematical Monographs.} Translated from the 1978 Japanese edition by the authors. American Mathematical Society, Providence, RI, 1991.

\bibitem{rieffel2} {\scshape  Rieffel, M.A.} The homotopy groups of the unitary groups of noncommutative tori. {\em Journal of Operator Theory} {\bf 17.2} (1987), 237-254. 

\bibitem{rordam} {\scshape R\o rdam, M.; Larsen, F.; Laustsen, N.} An introduction to K-theory for $C^{\ast}$-algebras. Vol. {\bf 49}. {\em London Mathematical Society Student Texts. Cambridge University Press, Cambridge,} (2000). 

\bibitem{rordam_stormer} {\scshape R\o rdam, M.; St\o rmer, E.} Classification of nuclear $C^{\ast}$-algebras. Entropy in operator algebras. Vol. {\bf 126}. {\em Encyclopaedia of Mathematical Sciences. Operator Algebras and Non-commutative Geometry, 7. Springer-Verlag, Berlin,} (2002).

\bibitem{thomsen} {\scshape Thomsen, Klaus}. Nonstable K-theory for operator algebras. {\em In: K-Theory. An Interdisciplinary Journal for the Development, Application, and Influence of K-Theory in the Mathematical Sciences} {\bf 4.3} (1991), 245-267. 

\end{thebibliography}
\end{document}